\newtheorem{myrem}{Remark}[section]
\newtheorem{mytheo}{Theorem}[section]
\newtheorem{mylem}{Lemma}[section]
\newtheorem*{mypro}{Proof}
\newtheorem{mycor}{Corollary}[section]
\theoremstyle{remark}
\numberwithin{equation}{section}
\begin{document}

\title{Uniform Hanson-Wright Type Deviation Inequalities for $\alpha$-Subexponential Random Vectors}


\author{Guozheng Dai}
\address{Zhejiang University, Hangzhou, 310027,  China.}
\email{11935022@zju.edu.cn}

\author{Zhonggen Su}
\address{Zhejiang University, Hangzhou, 310027,  China.}
\email{suzhonggen@zju.edu.cn}

\date{}

\keywords{$\alpha$-subexponential random variable, chaining argument, uniform Hanson-Wright inequality, restricted isometry property
}

\begin{abstract}
This paper is devoted to uniform versions of the Hanson-Wright inequality for a random vector with independent centered $\alpha$-subexponential entries, $0< \alpha\le 1$. Our method relies upon a novel decoupling inequality and a comparison of weak and strong moments. As an application, we use the derived inequality to prove the restricted isometry property of  partial random circulant matrices generated by standard $\alpha$-subexponential random vectors, $0<\alpha\le 1$.
\end{abstract}

\maketitle

\section{Introduction and Main Result}
Let $A=(a_{ij})_{n\times n}$ be an $n\times n$ fixed symmetric matrix and $S_{A}(\xi):=\sum_{i, j}a_{ij}\xi_{i}\xi_{j}$, where $\xi=(\xi_{1}, \cdots, \xi_{n})$ is a random vector with independent centered entries. A well-known concentration property of $S_{A}(\xi)$ is due to Hanson and Wright, claiming that if $\xi_{i}$ are independent centered subgaussian variables with $\Vert\xi_{i}\Vert_{\Psi_{2}}\le L$, then for all $t\ge 0$ (a modern version in \cite{Rudelson_ecp})
\begin{align}
	\textsf{P}\big\{\vert S_{A}(\xi)-\textsf{E}S_{A}(\xi)\vert \ge t  \big\}\le 2\exp\Big(-c\min\Big\{\frac{t^{2}}{L^{4}\Vert A\Vert_{F}^{2}}, \frac{t}{L^{2}\Vert A\Vert_{l_{2}\to l_{2}}}  \Big\}   \Big).\nonumber
\end{align}
Here, $\Vert \cdot\Vert_{F}$  and $\Vert \cdot\Vert_{l_{2}\to l_{2}}$ are the Frobenius norm and the spectral norm of a matrix (see \eqref{Notations} below), respectively. The Hanson-Wright type inequalities have found numerous applications in non-asymptotic theory of random matrices, statistics, and so on \cite{Bougain_gafa,Chen_X.H._Bernoulli,Koep_ACHA,Krahmer_CPAM,Zhou_S.H._Bernoulli}.

An important extension of such results is to consider the behaviour of random quadratic forms simultaneously for a family of $n\times n$ fixed symmetric matrices $\mathcal{A}$, that is the concentration property of $Z_{\mathcal{A}}(\xi):=\sup_{A\in\mathcal{A}}\sum_{i, j}a_{ij}\xi_{i}\xi_{j}$. When $\xi_{i}$ are independent Rademacher variables (take values $\pm 1$ with equal probabilities) and the diagnal entries of $A$ are $0$, Talagrand's celebrated work \cite{Talagrand_invention} yields for $t\ge 0$
\begin{align}\label{Eq_concentration_Talagrand}
	\textsf{P}\big\{\vert Z_{\mathcal{A}}(\xi)-\textsf{E}Z_{\mathcal{A}}(\xi)\vert \ge t  \big\}\le 2\exp\Big(-c\min\Big\{\frac{t^{2}}{(\textsf{E}\sup_{A\in \mathcal{A}} \Vert A\xi\Vert_{2})^{2}}, \frac{t}{\sup_{A\in\mathcal{A}}\Vert A\Vert_{l_{2}\to l_{2}}}  \Big\}   \Big).
\end{align}

Recently, Klochkov and Zhivotovskiy \cite{Klochkov_EJP} obtained a uniform Hanson-Wright type inequality for more general random variables via the entropy method. In particular, let $\xi_{i}$ be independent centered subgaussian variables and $L^{\prime}=\big\Vert\max_{i}\vert\xi_{i}\vert\big\Vert_{\Psi_{2}}$, then (without the condition $a_{ii}=0$)
\begin{align}\label{Eq_Klochkov}
	\textsf{P}\big\{ Z_{\mathcal{A}}(\xi)-\textsf{E}Z_{\mathcal{A}}(\xi)\ge t   \big\}
	\le \Big( -c\min\Big\{ \frac{t^{2}}{L^{\prime 2}(\textsf{E}\sup_{A\in \mathcal{A}}\Vert A\xi\Vert_{2})^{2}}, \frac{t}{L^{\prime 2}\sup_{A\in \mathcal{A}}\Vert A\Vert_{l_{2}\to l_{2}}}  \Big\}  \Big),
\end{align}
where $t\ge \max\{ L^{\prime}\textsf{E}\sup_{A\in\mathcal{A}}\Vert A\xi\Vert, L^{\prime 2}\sup_{A\in\mathcal{A}}\Vert A\Vert_{l_{2}\to l_{2}} \}$.

We usually call the results of the form \eqref{Eq_concentration_Talagrand} and \eqref{Eq_Klochkov} (one-sided) concentration inequalities. Another similar bounds are one-sided and have a multiplicative constant before $\textsf{E}Z_{\mathcal{A}}(\xi)$ (or replace $\textsf{E}Z_{\mathcal{A}}(\xi)$ with a suitable upper bound). This type of bound is called a deviation inequality. We next introduce some known uniform Hanson-Wright type deviation inequalities.

We first introduce some notations. Denote $\big(\textsf{E}\vert \xi_{1}\vert^{p}\big)^{1/p}$ by $\Vert \xi_{1}\Vert_{L_{p}}$ for a random variable $\xi_{1}$. Set $\alpha^{*}=\alpha/(\alpha-1)$ as the conjuate exponent of $\alpha$. Define $M_{F}(\mathcal{A}):=\sup_{A\in\mathcal{A}}\Vert A\Vert_{F}$ and $M_{l_{p_{1}}\to l_{p_{2}}}(\mathcal{A}):=\sup_{A\in\mathcal{A}}\Vert A\Vert_{l_{p_{1}}\to l_{p_{2}}}$ (see \eqref{Notations} below for the definition of the norm). We also denote Talagrand's $\gamma_{\alpha}$-functional with respect to $(\mathcal{A}, \Vert\cdot\Vert_{l_{2}\to l_{\alpha^{*}}})$ by $\gamma_{\alpha}(\mathcal{A}, \Vert\cdot\Vert_{l_{2}\to l_{\alpha^{*}}})$ (see \eqref{Eq_gamma_functional} below). Let
\begin{align}
	\Gamma(\alpha, \beta, \mathcal{A})=\gamma_{2}(\mathcal{A}, \Vert\cdot\Vert_{l_{2}\to l_{2}})+\gamma_{\alpha}(\mathcal{A}, \Vert\cdot\Vert_{l_{2}\to l_{\beta}})\nonumber
\end{align}
and 
\begin{align}
	U_{1}(\alpha, \beta)=& \Gamma(\alpha, \beta,  \mathcal{A}) \big(\Gamma(\alpha, \beta,  \mathcal{A})+M_{F}(\mathcal{A}) \big),\nonumber\\
	U_{2}(\alpha, \beta)=&M_{l_{2}\to l_{2}}(\mathcal{A})\Gamma(\alpha, \beta, \mathcal{A})+\sup_{A\in\mathcal{A}}\Vert A^\top A\Vert_{F},\nonumber\\
	U_{3}(\alpha, \beta)=&M_{l_{2}\to l_{\beta}}(\mathcal{A})\Gamma(\alpha, \beta, \mathcal{A}) .\nonumber
\end{align}

A widely used deviation inequality in the field of compressive sensing \cite{Bougain_gafa,Koep_ACHA,Krahmer_CPAM} was obtained by Krahmer et al \cite{Krahmer_CPAM}. Let $\xi_{i}$ be independent centered subgaussian variables and  $L=\max_{i}\Vert\xi\Vert_{\Psi_{2}}$. They proved for $t\ge 0$
\begin{align}\label{Eq_Krahmer}
	&\textsf{P}\Big\{ \sup_{A\in\mathcal{A}}\big\vert\Vert A\xi\Vert_{2}^{2}-\textsf{E}\Vert A\xi\Vert_{2}^{2}  \big\vert\ge CL^{2} \big(U_{1}(2, 2)+t\big) \Big\}
	\le 2\exp\Big(-\min\Big\{\frac{t^{2}}{U_{2}^{2}(2, 2)}, \frac{t}{M^{2}_{l_{2}\to l_{2}}(\mathcal{A})}\Big\}  \Big),
\end{align}  
where $\mathcal{A}$ is a family of $m\times n$ fixed matrices. 

Dai et al \cite{Dai_logconcave_RIP} extended \eqref{Eq_Krahmer} to the case where $\xi_{i}$ are independent centered $\alpha$-subexponential variables, $1\le \alpha\le 2$. Recall that a random variable $\xi_{1}$ is $\alpha$-subexponential (or $\alpha$-sub-Weibull) if satisfying
\begin{align}
	\textsf{P}\{\vert \xi_{1}\vert\ge Kt  \}\le ce^{-ct^{\alpha}},\quad t\ge 0,\nonumber
\end{align}
where $K$ is a parameter and $c$ is a universal constant.
The $\alpha$-subexponential (quasi-)norm of $\xi_{1}$ is defined as follows:
\begin{align}
	\Vert \xi_{1}\Vert_{\Psi_{\alpha}}:=\inf\{ t>0:\textsf{E}\exp(\frac{\vert \xi_{1}\vert^{\alpha}}{t^{\alpha}})\le 2\}.\nonumber
\end{align}
There are many interesting choices of random variables of this type, such as subexponential variables ($\alpha=1$), subgaussian variables ($\alpha=2$), or bounded variables (for any $\alpha>0$). 

Let $\xi_{i}$ be independent centered $\alpha$-subexponential variables and $L=L(\alpha)=\max_{i}\Vert\xi_{i}\Vert_{\Psi_{\alpha}}$. Dai et al (see Corollary $1.2^{*}$ in \cite{Dai_logconcave_RIP}) proved for $t\ge 0$
\begin{align}\label{Eq_Dai}
	&\textsf{P}\Big\{ \sup_{A\in \mathcal{A}}\big\vert \Vert A\xi\Vert_{2}^{2}-\textsf{E}\Vert A\xi\Vert_{2}^{2}  \big\vert>C(\alpha)L^{2}(U_{1}(\alpha, \alpha^{*})+t)\Big\}\nonumber\\
	\le& C_{1}(\alpha)\exp\Big(-\min\Big\{(\frac{t}{U_{2}(\alpha, \alpha^{*})})^{2},( \frac{t}{U_{3}(\alpha, \alpha^{*})})^{\alpha}, (\frac{t}{M_{l_{2}\to l_{2}}^{2}(\mathcal{A})})^{\alpha/2}\Big\}\Big),
\end{align}
where $\mathcal{A}$ is a family of $m\times n$ fixed matrices.

Our main result shows a uniform Hanson-Wright type deviation inequality for a random vector with independent centered $\alpha$-subexponential variables ($0<\alpha\le 1$), reading as follows:
\begin{mytheo}\label{Theo_main}
	Let $\xi=(\xi_{1}, \cdots, \xi_{n})$ be a random vector with independent centered $\alpha$-subexponential entries ($0<\alpha\le 1$) and $\mathcal{A}$ be a family of fixed $m\times n$ matrices. Then, we have for $t\ge 0$
	\begin{align}
		&\textsf{P}\Big\{ \sup_{A\in \mathcal{A}}\big\vert \Vert A\xi\Vert_{2}^{2}-\textsf{E}\Vert A\xi\Vert_{2}^{2}  \big\vert>C(\alpha)L^{2}(U_{1}(\alpha, \infty)+t)\Big\}\nonumber\\
		\le& C_{1}(\alpha)\exp\Big(-\min\Big\{(\frac{t}{U_{2}(\alpha, \infty)})^{2},( \frac{t}{U_{3}(\alpha, \infty)})^{\alpha}, (\frac{t}{M_{l_{2}\to l_{2}}^{2}(\mathcal{A})})^{\alpha/2}\Big\}\Big),\nonumber
	\end{align}
	where $L=L(\alpha)=\max_{i}\Vert \xi_{i}\Vert_{\Psi_{\alpha}}$.
\end{mytheo}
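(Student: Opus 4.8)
The plan is to run a moment-based argument: fix $p\ge 1$, bound the $p$-th moment of $\sup_{A\in\mathcal{A}}\big\vert \Vert A\xi\Vert_{2}^{2}-\textsf{E}\Vert A\xi\Vert_{2}^{2}\big\vert$, and then convert that moment bound into the stated tail estimate by Markov's inequality and optimization over $p$. Writing $B_{A}=A^{\top}A$, one has $\Vert A\xi\Vert_{2}^{2}=\sum_{i,j}(B_{A})_{ij}\xi_{i}\xi_{j}$, so I would first split the centered form into its diagonal part $\sum_{i}(B_{A})_{ii}(\xi_{i}^{2}-\textsf{E}\xi_{i}^{2})$ and its off-diagonal part $\sum_{i\neq j}(B_{A})_{ij}\xi_{i}\xi_{j}$; because the entries of $\xi$ are independent and centered, the cross expectations vanish and these two pieces may be treated separately.

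For the off-diagonal chaos I would invoke the \emph{decoupling inequality} announced in the abstract to replace $\sum_{i\neq j}(B_{A})_{ij}\xi_{i}\xi_{j}$ by the decoupled form $\sum_{i,j}(B_{A})_{ij}\xi_{i}\tilde\xi_{j}=\langle A\xi, A\tilde\xi\rangle$, where $\tilde\xi$ is an independent copy of $\xi$. Conditioning on $\tilde\xi$, this is a linear form in $\xi$ indexed by $A\in\mathcal{A}$ whose increments are governed by two metrics on $\mathcal{A}$: an $l_{2}\to l_{2}$ distance carrying the subgaussian part of the mixed tail, and an $l_{2}\to l_{\infty}$ distance carrying the $\alpha$-subexponential part (note $\alpha^{*}=\infty$ when $0<\alpha\le 1$). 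Talagrand's generic chaining for mixed subgaussian/$\alpha$-subexponential increments then bounds the conditional $p$-th moment of $\sup_{A}\langle A\xi, A\tilde\xi\rangle$ by a combination of $\gamma_{2}(\mathcal{A},\Vert\cdot\Vert_{l_{2}\to l_{2}})$ and $\gamma_{\alpha}(\mathcal{A},\Vert\cdot\Vert_{l_{2}\to l_{\infty}})$ times suitable norms of $A\tilde\xi$; integrating out $\tilde\xi$ via the \emph{comparison of weak and strong moments} turns quantities such as $\textsf{E}\sup_{A}\Vert A\tilde\xi\Vert_{2}$ into the functionals $\Gamma(\alpha,\infty,\mathcal{A})$ and $M_{F}(\mathcal{A})$, which is the source of the product structure $\Gamma(\Gamma+M_{F})$ in $U_{1}(\alpha,\infty)$.

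For the diagonal term $\sup_{A}\big\vert\sum_{i}(B_{A})_{ii}(\xi_{i}^{2}-\textsf{E}\xi_{i}^{2})\big\vert$, I would use that each $\xi_{i}^{2}-\textsf{E}\xi_{i}^{2}$ is a centered $\alpha/2$-subexponential variable, so this is a supremum of sums of independent heavy-tailed variables whose coefficients are entries of $A^{\top}A$; a Rosenthal-type moment bound in the $\alpha/2$-subexponential regime, together with chaining against $\Vert\cdot\Vert_{l_{2}\to l_{2}}$ and the quantity $\sup_{A}\Vert A^{\top}A\Vert_{F}$ from $U_{2}$, yields the required control. Adding the diagonal and off-diagonal moment bounds produces an estimate whose leading constant term is $C(\alpha)L^{2}U_{1}(\alpha,\infty)$ and whose fluctuation terms grow like $p^{1/2}$, $p^{1/\alpha}$ and $p^{2/\alpha}$; matching these three growth rates against $U_{2}(\alpha,\infty)$, $U_{3}(\alpha,\infty)$ and $M_{l_{2}\to l_{2}}^{2}(\mathcal{A})$ produces exactly the three-regime minimum in the tail.

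The main obstacle, and the reason the range $0<\alpha\le 1$ is genuinely harder than the range $1\le\alpha\le 2$ treated in \cite{Dai_logconcave_RIP}, is that $\Vert\cdot\Vert_{\Psi_{\alpha}}$ is only a \emph{quasi}-norm for $\alpha<1$: the triangle inequality fails, so the symmetrization, decoupling and moment-splitting steps cannot be applied verbatim and must be replaced by quasi-norm versions costing only extra $\alpha$-dependent constants. Equally delicate is the conditional chaining for the decoupled linear form, where the conjugate exponent degenerates to $\alpha^{*}=\infty$; one must verify that the increments really do exhibit the mixed tail at the $l_{2}\to l_{\infty}$ scale and that the chaining still produces a clean $\gamma_{\alpha}$ bound in this limiting case. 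Finally, making the weak-to-strong moment comparison hold uniformly over $\mathcal{A}$ with constants independent of $m$ and $n$ is the technical step that ties the linear-form chaining bound back to the matrix functionals $\Gamma(\alpha,\infty,\mathcal{A})$ and $M_{F}(\mathcal{A})$.
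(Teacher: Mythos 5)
Your overall skeleton---decouple the quadratic form, chain the decoupled bilinear form conditionally on the second vector with the two metrics $\Vert\cdot\Vert_{l_{2}\to l_{2}}$ and $\Vert\cdot\Vert_{l_{2}\to l_{\infty}}$, control $\big\Vert\sup_{A}\Vert A\tilde{\xi}\Vert_{2}\big\Vert_{L_{p}}$ by a comparison of weak and strong moments, and finish with a moments-to-tails conversion---is indeed the paper's strategy. But there is a genuine gap where you apply the weak-strong comparison to linear forms in $\tilde{\xi}$ itself. Lemma \ref{Lem_comparison} requires the moment-regularity hypothesis $\Vert\xi_{i}\Vert_{L_{2p}}\le C\Vert\xi_{i}\Vert_{L_{p}}$ for all $p\ge 2$, and this constant is \emph{not} controlled by $\Vert\xi_{i}\Vert_{\Psi_{\alpha}}$: take $\xi_{i}$ symmetric with $\vert\xi_{i}\vert=M$ with probability $e^{-M^{\alpha}}$ and $\vert\xi_{i}\vert=1$ otherwise; then $\Vert\xi_{i}\Vert_{\Psi_{\alpha}}\lesssim_{\alpha}1$ uniformly in $M$, yet the ratio $\Vert\xi_{i}\Vert_{L_{2p}}/\Vert\xi_{i}\Vert_{L_{p}}$ is of order $\sqrt{M}$ near $p\asymp M^{\alpha}/\log M$. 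So applying Lemma \ref{Lem_comparison} directly to the entries of $\xi$ yields constants that cannot be written as $C(\alpha)L^{2}$, and the theorem as stated does not follow. This is exactly why the paper's decoupling inequality (Lemma \ref{Lem_decoupling} together with Remark \ref{Rem_2}) is formulated for \emph{two different} sequences: in one stroke it both decouples and replaces the $\alpha$-subexponential entries by canonical symmetric Weibull variables $\zeta_{i}\sim\mathcal{W}_{s}(\alpha)$, and all subsequent steps---the conditional chaining via Lemma \ref{Lem_alpha_0} (whose moment estimates are specific to Weibull variables) and the weak-strong comparison (whose hypothesis Weibull variables do satisfy)---are run on $\zeta,\tilde{\zeta}$ only. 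Your proof needs this replacement inserted before any chaining or comparison (via the tail-domination hypothesis \eqref{Condition_tail_decoupling}, or by symmetrization plus Corollary \ref{Cor_contraction_principle}); it cannot be dismissed as a ``quasi-norm constant'' issue.

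Second, your diagonal/off-diagonal split is both unnecessary and the place where your sketch stops short. The paper's decoupling lemma bounds $\textsf{E}\sup_{A}F\big(\xi^{\top}A^{\top}A\xi-\textsf{E}\xi^{\top}A^{\top}A\xi\big)$ directly by $\textsf{E}\sup_{A}F\big(C\zeta^{\top}A^{\top}A\tilde{\zeta}\big)$ with the diagonal included, so no separate diagonal term ever appears. In your route the diagonal process $\sup_{A}\big\vert\sum_{i}(A^{\top}A)_{ii}(\xi_{i}^{2}-\textsf{E}\xi_{i}^{2})\big\vert$ must be chained on its own, and its natural increment distances are $\Vert\mathrm{diag}(A^{\top}A-B^{\top}B)\Vert_{2}$ and $\Vert\mathrm{diag}(A^{\top}A-B^{\top}B)\Vert_{\infty}$; bounding these by the distances $\Vert A-B\Vert_{l_{2}\to l_{2}}$ and $\Vert A-B\Vert_{l_{2}\to l_{\infty}}$ that define $\Gamma(\alpha,\infty,\mathcal{A})$, without dimensional losses, is a nontrivial estimate that ``a Rosenthal-type bound together with chaining'' does not supply. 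A smaller inaccuracy: the product structure $\Gamma(\Gamma+M_{F})$ in $U_{1}$ does not come from the weak-strong comparison alone, but from a separate self-bounding step, namely applying the chaining bound at low moments to get $\textsf{E}\sup_{A}\Vert A\tilde{\zeta}\Vert_{2}^{2}\lesssim_{\alpha}\Gamma\,\textsf{E}\sup_{A}\Vert A\tilde{\zeta}\Vert_{2}+M_{F}^{2}$ and solving the resulting quadratic inequality for $\textsf{E}\sup_{A}\Vert A\tilde{\zeta}\Vert_{2}$.
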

\begin{myrem}
	(i) In the setting of Theorem \ref{Theo_main}, G\"{o}tze et al showed for $t\ge 0$ (see Proposition 1.1 in \cite{Gotze_EJP})
	\begin{align}
		\textsf{P}\Big\{ \big\vert \Vert A\xi\Vert_{2}^{2}-\textsf{E}\Vert A\xi\Vert_{2}^{2}  \big\vert>C(\alpha)L^{2}t\Big\}
		\le C_{1}(\alpha)\exp\Big(-\min\Big\{(\frac{t}{\Vert A^{\top}A\Vert_{F}})^{2}, (\frac{t}{\Vert A^{\top}A\Vert_{l_{2}\to l_{2}}})^{\alpha/2}\Big\}\Big).\nonumber
	\end{align}
	Theorem \ref{Theo_main} recovers this result when $\mathcal{A}$ contains only one matrix.
	
	(ii) When $\alpha=1$, Theroem \ref{Theo_main} yields the same result as \eqref{Eq_Dai}.
\end{myrem}

In the case $\alpha =2$, the uniform Hanson-Wright deviation inequality (see \eqref{Eq_Krahmer}) is derived through the application of the classic decoupling inequality (see Theorem 2.5 in \cite{Krahmer_CPAM}) and the majorizing measure theorem (an optimal bound for the suprema of the stochastic process, e.g., (2.2) in \cite{Krahmer_CPAM}). To extend the deviation inequality to the case $1\le \alpha\le 2$, Dai et al \cite{Dai_logconcave_RIP} establish a novel decoupling inequality (see Lemma \ref{Lem_decoupling} below) and then obtain the desired result via the corresponding majorizing measure theorem (see Lemma 2.7 in \cite{Dai_logconcave_RIP}). For the case $0<\alpha\le 1$, although the novel decoupling inequality is valid (see Remark \ref{Rem_2} below), the majorizing measure theorem is absent, presenting a significant challenge to obtaining the deviation inequalities.

The main contribution of this paper is to show a uniform Hanson-Wright type deviation inequality for $\alpha$-subexponential ($0<\alpha\le 1$) entries. Our method of proof is based on a combination of the novel decoupling inequality with a comparison of weak and strong moments (see Lemma \ref{Lem_comparison}), the latter of which plays the same role as the majorizing measure theorem.

The remainder of this paper is structured as follows: Section 2 will present some notations and auxiliary lemmas. In Section 3, we shall prove our main result, Theorem \ref{Theo_main}. Finally, we will give an application of our main result in Section 4. In particular, we show the R.I.P. for partial random circulant matrices generated by standard $\alpha$-subexponential  ($0<\alpha\le 1$) random vectors.

\section{Preliminaries}
\subsection{Notations}
For a fixed vector $x=(x_{1},\cdots, x_{n})^\top\in \mathbb{R}^{n}$, we denote by $\Vert x\Vert_{p}=(\sum\vert x_{i}\vert^{p})^{1/p}$ the $l_{p}$ norm. Set $S^{n-1}_{p}=\{x\in\mathbb{R}^{n}: \Vert x\Vert_{p}=1 \}$ and $B^{n}_{p}=\{x\in\mathbb{R}^{n}: \Vert x\Vert_{p}\le 1 \}$.
We use $\Vert \xi\Vert_{L_p}=(\textsf{E}\vert \xi_{1}\vert^{p})^{1/p}$ for the $L_{p}$ norm of a random variable $\xi_{1}$.
As for an $m\times n$ matrix $A=(a_{ij})$, recall the following notations of norms
\begin{align}\label{Notations}
	\Vert A\Vert_{F}=&\sqrt{\sum_{i,j}\vert a_{ij}\vert^{2}}, \quad \Vert A\Vert_{\infty}=\max_{ij}\vert a_{ij}\vert, \quad \Vert A\Vert_{l_{p}(l_{2})}=(\sum_{i\le m}(\sum_{j\le n}\vert a_{ij}\vert^{2})^{p/2})^{1/p}\nonumber\\
	&\Vert A\Vert_{l_{p_{1}}\to l_{p_{2}}}=\sup \{\vert\sum a_{ij}x_{j}y_{i}\vert: \Vert x\Vert_{p_{1}}\le 1, \Vert y\Vert_{p_{2}^{*}} \le 1  \},
\end{align}
where $p_{2}^{*}=p_{2}/(p_{2}-1)$. We remark that $\Vert A\Vert_{l_{2}\to l_{2}}$ is the important spectral norm of $A$ and $\Vert A\Vert_{l_{1}\to l_{\infty}}=\Vert A\Vert_{\infty}$.

Unless otherwise stated, we denote by $C, C_{1}, c, c_{1},\cdots$ universal constants, and by $C(\delta), c(\delta),\cdots $ constants that depend only on the parameter $\delta$. 
For convenience, we write $f\lesssim g$ if $f\le Cg$ for some universal constant $C$ and write $f\lesssim_{\delta} g$ if $f\le C(\delta)g$ for some constant $C(\delta)$. We also say $f\asymp g$ if $f\lesssim g$ and $g\lesssim f$, so does $f\asymp_{\delta} g$. 

We say $\xi$ is  a symmetric Weibull variable with the scale parameter $1$ and the shape parameter $\alpha$ if $-\log \textsf{P}\{\vert\xi\vert>x  \}=x^{\alpha}, x\ge 0$. For convenience, we shall write $\xi\sim \mathcal{W}_{s}(\alpha)$. Given a nonempty set $T\subset l_{2}:=\{t: \sum_{i}t_{i}^{2}<\infty \}$,  we call $\{S_{t}: t\in T \}$ a canonical process, where $S_{t}=\sum_{i=1}^{\infty}t_{i}\xi_{i}$ and $\{\xi_{i}\}$ are independent random variables.

For a metric space $(T, d)$, a sequence of subsets $\{T_{r}: r\ge 0 \}$ of $T$ is admissible if for every $r\ge 1, \vert T_{r}\vert\le 2^{2^{r}}$ and $\vert T_{0}\vert=1$. For any $0<\alpha<\infty$, the definition of Talagrand's $\gamma_{\alpha}$-functional of $(T, d)$ is  as follows:
\begin{align}\label{Eq_gamma_functional}
	\gamma_{\alpha}(T, d)=\inf \sup_{t\in T}\sum_{r\ge 0}2^{r/\alpha}d(t, T_{r}),
\end{align}
where the infimum is taken concerning all admissible sequences of $T$. 

Given a set $T$, a sequence of partitions $(\mathcal{T}_{n})_{n\ge 0}$ of $T$ is admissible if satisfies $\vert \mathcal{T}_{0}\vert=1$, $\vert \mathcal{T}_{n}\vert\le 2^{2^{n}}$ for $n\ge 1$ and every set of $\mathcal{T}_{n+1}$ is contained in a set of $\mathcal{T}_{n}$ (increasing partitions). Denote  the unique element of $\mathcal{T}_{n}$ which contains $t$ by $T_{n}(t)$ and  the diameter of the set $T$ by $\Delta_{d}(T)$. Let
\begin{align}
	\gamma_{\alpha}^{\prime}(T, d)=\inf_{\mathcal{T}}\sup_{t\in T}\sum_{n\ge 0}2^{n/\alpha}\Delta_{d}(T_{n}(t)),\nonumber
\end{align}
where the infimum is taken over all admissible partitions $\mathcal{T}$ of $T$. Talagrand \cite{Talagrand_annals_prob} showed $\gamma_{\alpha}(T, d)\le \gamma^{\prime}_{\alpha}(T, d)\le C(\alpha)\gamma_{\alpha}(T, d)$.

\subsection{Tails and Moments}
In this subsection, we shall present some properties about the tails  and the  moments of random variables. 
\begin{mylem}[Lemma 4.6 in \cite{Latala_Inventions}]\label{Lem_moments1}
	Let $\xi$ be a random variable such that
	\begin{align}
		\theta_{1}p^{1/\alpha}\le \Vert \xi\Vert_{L_{p}}\le \theta_{2}p^{1/\alpha} \quad \text{for all}\,\, p\ge 2.\nonumber
	\end{align}
	Then there exist constants $c_{1}, c_{2}$ depending only on $\theta_{1}, \theta_{2}$ and $\alpha$ such that
	\begin{align}
		c_{1}e^{-t^{\alpha}/c_{1}}\le \textsf{P}\{\vert\xi\vert\ge t  \}\le c_{2}e^{-t^{\alpha}/c_{2}}\quad \text{for all}\,\, t\ge 0.\nonumber
	\end{align}
\end{mylem}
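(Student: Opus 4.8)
The plan is to prove the two tail estimates separately, since they encode the upper and lower moment hypotheses respectively. The upper tail bound $\textsf{P}\{|\xi|\ge t\}\le c_{2}e^{-t^{\alpha}/c_{2}}$ will follow from the upper moment bound via Markov's inequality and an optimization in $p$; the lower tail bound $\textsf{P}\{|\xi|\ge t\}\ge c_{1}e^{-t^{\alpha}/c_{1}}$ is the more delicate direction and I would obtain it from the Paley--Zygmund inequality, using the lower moment bound at exponent $p$ together with the upper moment bound at exponent $2p$.

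For the upper bound I would start from $\textsf{E}|\xi|^{p}\le(\theta_{2}p^{1/\alpha})^{p}$ and apply Markov to $|\xi|^{p}$,
\begin{align}
	\textsf{P}\{|\xi|\ge t\}\le \frac{\textsf{E}|\xi|^{p}}{t^{p}}\le \Big(\frac{\theta_{2}p^{1/\alpha}}{t}\Big)^{p}.\nonumber
\end{align}
Minimizing the exponent $p\big(\log\theta_{2}+\tfrac{1}{\alpha}\log p-\log t\big)$ over $p$ gives the optimal choice $p\asymp (t/\theta_{2})^{\alpha}$, which yields a bound of the form $\exp(-c\,t^{\alpha}/\theta_{2}^{\alpha})$ whenever the optimal $p$ exceeds the admissibility threshold $p\ge 2$, i.e.\ for $t$ above a constant multiple of $\theta_{2}$. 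For the remaining small values of $t$ the claimed bound holds trivially after enlarging $c_{2}$, since $\textsf{P}\{|\xi|\ge t\}\le 1$ while $c_{2}e^{-t^{\alpha}/c_{2}}$ can be kept above $1$ on that bounded range.

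For the lower bound I would apply the Paley--Zygmund inequality to the nonnegative variable $Z=|\xi|^{p}$ with $\lambda=1/2$,
\begin{align}
	\textsf{P}\Big\{|\xi|^{p}>\tfrac{1}{2}\textsf{E}|\xi|^{p}\Big\}\ge \frac{1}{4}\,\frac{(\textsf{E}|\xi|^{p})^{2}}{\textsf{E}|\xi|^{2p}}.\nonumber
\end{align}
Inserting the lower bound $(\textsf{E}|\xi|^{p})^{2}\ge \theta_{1}^{2p}p^{2p/\alpha}$ and the upper bound $\textsf{E}|\xi|^{2p}\le \theta_{2}^{2p}(2p)^{2p/\alpha}$ controls the ratio from below by $\rho^{p}$, where $\rho=\theta_{1}^{2}/(\theta_{2}^{2}2^{2/\alpha})\in(0,1)$, while $\tfrac{1}{2}\textsf{E}|\xi|^{p}\ge \tfrac{1}{2}\theta_{1}^{p}p^{p/\alpha}$ lets me replace the random threshold by a deterministic one of size $\asymp\theta_{1}p^{1/\alpha}$. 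Choosing $p\asymp (t/\theta_{1})^{\alpha}$ so that this threshold equals $t$ converts $\tfrac{1}{4}\rho^{p}$ into $\tfrac{1}{4}\exp(-c\,t^{\alpha})$, with $c$ depending on $\theta_{1},\theta_{2},\alpha$ only through $|\log\rho|$, as desired. I expect this to be the main obstacle: the Paley--Zygmund ratio decays geometrically in $p$, so I must track the base $\rho$ precisely to confirm that the resulting exponent is genuinely of order $t^{\alpha}$ and not worse. As before the regime where $p\ge 2$ fails must be treated separately; here I would run the same argument at the single value $p=2$, which gives $\textsf{P}\{|\xi|>c\,\theta_{1}2^{1/\alpha}\}\ge \tfrac{1}{4}\rho^{2}>0$, so that $\textsf{P}\{|\xi|\ge t\}$ is bounded below by a fixed positive constant on the small-$t$ range, and the claim follows after shrinking $c_{1}$.
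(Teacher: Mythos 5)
Your proof is correct. The paper itself contains no proof of this statement — it is imported verbatim as Lemma 4.6 of \cite{Latala_Inventions} — and your two-step argument (Markov's inequality with the optimized moment order $p\asymp(t/\theta_{2})^{\alpha}$ for the upper tail; Paley--Zygmund applied to $Z=|\xi|^{p}$, combining the lower $p$-th moment bound with the upper $2p$-th moment bound, for the lower tail; and the trivial, respectively $p=2$, treatment of the small-$t$ range) is exactly the standard proof of this moment-to-tail equivalence used in that reference, so the approaches coincide.
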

\begin{mylem}[Lemma 2.1 in \cite{Dai_logconcave_RIP}]\label{Lem_Moments_2}
	Assume that a random variable $\xi$ satisfies for $p\ge p_{0}$
	\begin{align}
		\Vert \xi\Vert_{L_p}\le \sum_{k=1}^{m}C_{k}p^{\beta_{k}}+C_{m+1},\nonumber
	\end{align}
	where $C_{1},\cdots, C_{m+1}> 0$ and $\beta_{1},\cdots, \beta_{m}>0$. Then we have for any $t>0$,
	\begin{align}
		\textsf{P}\big\{ \vert \xi\vert>e(mt+C_{m+1}) \big\}\le e^{p_{0}}\exp\Big(-\min\big\{\big(\frac{t}{C_{1}}\big)^{1/\beta_{1}},\cdots, \big(\frac{t}{C_{m}}\big)^{1/\beta_{m}}\big\}\Big)\nonumber
	\end{align}
	and
	\begin{align}
		\textsf{P}\Big\{\vert \xi\vert>e \big(\sum_{k=1}^{m}C_{k}t^{\beta_{k}}+C_{m+1}\big)  \Big\}\le e^{p_{0}}e^{-t}.\nonumber
	\end{align}
\end{mylem}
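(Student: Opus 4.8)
The plan is to obtain both tail bounds from a single application of Markov's inequality to the $p$-th absolute moment, with the free parameter $p$ chosen differently in each case, and to let the prefactor $e^{p_{0}}$ absorb the regime in which the natural choice of $p$ would violate the constraint $p\ge p_{0}$. Raising the moment hypothesis to the $p$-th power, I would first record that for every $p\ge p_{0}$ and every $u>0$,
\begin{align}
	\textsf{P}\{\vert\xi\vert>u\}\le\frac{\textsf{E}\vert\xi\vert^{p}}{u^{p}}=\Big(\frac{\Vert\xi\Vert_{L_{p}}}{u}\Big)^{p}\le\Big(\frac{\sum_{k=1}^{m}C_{k}p^{\beta_{k}}+C_{m+1}}{u}\Big)^{p}.\nonumber
\end{align}
Everything then reduces to selecting $p$ so that the numerator is controlled by $u/e$, which turns the ratio into $e^{-1}$ raised to the power $p$.

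For the first bound I would set $p=\min_{k}(t/C_{k})^{1/\beta_{k}}$ and $u=e(mt+C_{m+1})$. Assume first that this $p$ satisfies $p\ge p_{0}$, so that the displayed estimate applies. By construction $p\le(t/C_{k})^{1/\beta_{k}}$ for each $k$, hence $C_{k}p^{\beta_{k}}\le t$, and the numerator is at most $mt+C_{m+1}=u/e$; substituting gives $\textsf{P}\{\vert\xi\vert>u\}\le e^{-p}=\exp\big(-\min_{k}(t/C_{k})^{1/\beta_{k}}\big)$, which is in fact sharper than the asserted bound. If instead $p<p_{0}$, then $e^{p_{0}}\exp\big(-\min_{k}(t/C_{k})^{1/\beta_{k}}\big)=e^{p_{0}-p}>1$, so the inequality is trivially true since its left-hand side is a probability. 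Combining the two regimes yields the first claim.

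For the second bound the argument is the same with the simpler choice $p=t$ and $u=e\big(\sum_{k}C_{k}t^{\beta_{k}}+C_{m+1}\big)$. When $t\ge p_{0}$ the numerator equals $\sum_{k}C_{k}t^{\beta_{k}}+C_{m+1}=u/e$, so the displayed estimate produces $\textsf{P}\{\vert\xi\vert>u\}\le e^{-t}$; when $t<p_{0}$ the factor $e^{p_{0}}e^{-t}=e^{p_{0}-t}$ already exceeds $1$ and the stated bound is vacuous. This gives the second claim.

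There is no genuinely hard step here; the lemma is a routine moments-to-tails conversion, and the only point demanding attention is the restriction $p\ge p_{0}$ in the hypothesis, which forbids a naive optimization over all positive $p$. The single idea that carries the proof is that the prefactor $e^{p_{0}}$ is engineered precisely to cover the small-$t$ (equivalently small-$p$) range where the desired parameter would drop below $p_{0}$: there the right-hand side exceeds $1$, so the bound holds automatically, while for larger $t$ the clean exponential tail falls out of Markov's inequality with the threshold inflated by the factor $e$.
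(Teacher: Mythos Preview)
Your proof is correct and is precisely the standard moments-to-tails argument one would expect; the paper itself does not reprove this lemma but cites it from \cite{Dai_logconcave_RIP}, where the same Markov-plus-optimization approach is used. There is nothing to add.
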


\begin{mylem}[Example 3 in \cite{Latala_SPL}]\label{Lem_alpha_0}
	Let $\xi_{1}, \cdots,\xi_{n}\stackrel{i.i.d.}{\sim}\mathcal{W}_{s}(\alpha)$.	Assume $0<\alpha\le 1$, then we have for $p\ge 2$
	
	(i) \begin{align}
		\big\Vert \sum_{i\le n}a_{i}\xi_{i}\big\Vert_{L_{p}}\asymp_{\alpha} p^{1/2}\Vert a\Vert_{2}+p^{1/\alpha}\Vert a\Vert_{\infty} ,\nonumber
	\end{align}
	where $a=(a_{1},\cdots, a_{n})^\top$ is a fixed vector. 
	
	(ii)\begin{align}\label{Eq_Lem_optimalbound_1}
		\big\Vert \sum_{i, j}a_{ij}\xi_{i}\tilde{\xi}_{j}\big\Vert_{L_{p}}\asymp_{\alpha}p^{1/2}\Vert A\Vert_{F}+p\Vert A\Vert_{l_{2}\to l_{2}}
		+p^{(\alpha+2)/2\alpha}\Vert A\Vert_{l_{2}\to l_{\infty}}+p^{2/\alpha}\Vert A\Vert_{\infty},
	\end{align}
	where $A=(a_{ij})$ is a fixed symmetric matrix and $\{\tilde{\xi}_{i}, i\le n\}$ are independent copies of $\{\xi_{i}, i\le n\}$.
\end{mylem}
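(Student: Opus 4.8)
The plan is to read the statement as a two-level hierarchy of moment estimates, proving the linear bound (i) first and then bootstrapping it to the chaos bound (ii). Everything rests on the single-variable growth $\Vert\xi\Vert_{L_q}\asymp_\alpha q^{1/\alpha}$ for $\xi\sim\mathcal{W}_s(\alpha)$, $q\ge 2$, which follows from a direct computation: from the tail $-\log\textsf{P}\{\vert\xi\vert>x\}=x^\alpha$ one gets $\textsf{E}\vert\xi\vert^{q}=\frac{q}{\alpha}\Gamma(q/\alpha)$, and Stirling's formula gives $\big(\frac{q}{\alpha}\Gamma(q/\alpha)\big)^{1/q}\asymp_\alpha q^{1/\alpha}$. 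The two terms in (i) then have transparent meanings: $p^{1/2}\Vert a\Vert_2$ is the Gaussian bulk and $p^{1/\alpha}\Vert a\Vert_\infty$ is the contribution of the single largest coordinate.

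For (i) I would obtain the lower bound from two extremal regimes. The contraction/symmetry inequality $\Vert\sum_i a_i\xi_i\Vert_{L_p}\ge\max_j\Vert a_j\xi_j\Vert_{L_p}=\Vert a\Vert_\infty\Vert\xi\Vert_{L_p}\asymp_\alpha p^{1/\alpha}\Vert a\Vert_\infty$ handles the heavy term, while a Gaussian/central-limit comparison (dominant when the coefficients are spread out) produces $\gtrsim p^{1/2}\Vert a\Vert_2$; since $\alpha\le1$ forces $p^{1/\alpha}\ge p^{1/2}$, one checks these two cases always recover the sum of the two terms. For the upper bound the cleanest route is to invoke the moment-comparison theorem for weighted sums of i.i.d.\ symmetric variables with log-convex tails (Gluskin--Kwapie\'n, Hitczenko--Montgomery-Smith--Oleszkiewicz), whose specialization to the Weibull weight is precisely Example 3 of \cite{Latala_SPL}; the heuristic behind it is a truncation at level $M\asymp_\alpha p^{1/\alpha}$, the bounded part giving the subgaussian term $p^{1/2}\Vert a\Vert_2$ and the few large values giving $p^{1/\alpha}\Vert a\Vert_\infty$.

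For (ii) the natural reduction is conditioning and decoupling. Writing $S:=\sum_{i,j}a_{ij}\xi_i\tilde\xi_j=\sum_i\xi_i Y_i$ with $Y_i:=(A\tilde\xi)_i=\sum_j a_{ij}\tilde\xi_j$, I would view $S$ conditionally on $\tilde\xi$ as a linear form in the independent vector $\xi$, apply the \emph{two-sided} estimate (i) inside the conditional expectation, and then integrate in $\tilde\xi$ (triangle inequality for the upper bound, and keeping only one summand for the lower bound). This yields the clean intermediate equivalence
\[
\Vert S\Vert_{L_p}\asymp_\alpha p^{1/2}\big\Vert\,\Vert A\tilde\xi\Vert_2\,\big\Vert_{L_p}+p^{1/\alpha}\big\Vert\,\Vert A\tilde\xi\Vert_\infty\,\big\Vert_{L_p},
\]
so that (ii) is reduced to two-sided $L_p$ estimates of the random variables $\Vert A\tilde\xi\Vert_2=(\sum_i Y_i^2)^{1/2}$ and $\Vert A\tilde\xi\Vert_\infty=\max_i\vert Y_i\vert$.

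These two sub-estimates are where the real work lies, and the second is the main obstacle. For $\Vert A\tilde\xi\Vert_2$ I would use $\big\Vert\,\Vert A\tilde\xi\Vert_2\,\big\Vert_{L_p}=\Vert\sum_i Y_i^2\Vert_{L_{p/2}}^{1/2}$ and analyze the nonnegative quadratic form $\sum_i Y_i^2=\tilde\xi^\top(A^\top A)\tilde\xi$: its mean is $\asymp\Vert A\Vert_F^2$, its Gaussian-scale fluctuation is of order $p\,\Vert A^\top A\Vert_{l_2\to l_2}=p\,\Vert A\Vert_{l_2\to l_2}^2$ (a Hanson--Wright-type bound for the light part), and its genuinely heavy fluctuations are governed by $\Vert A\Vert_{l_2\to l_\infty}$ and $\Vert A\Vert_\infty$ through a part-(i)-type bound for the $\tfrac{\alpha}{2}$-sub-Weibull variables $\tilde\xi_j^2$; taking square roots and multiplying by $p^{1/2}$ supplies $p^{1/2}\Vert A\Vert_F$, $p\,\Vert A\Vert_{l_2\to l_2}$ and part of the two heavy terms. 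For $\Vert A\tilde\xi\Vert_\infty=\max_i\vert\langle a_{i\cdot},\tilde\xi\rangle\vert$ the lower bound is immediate from a single extremal row, but the upper bound is delicate: a crude union bound over the $m$ rows costs a factor $(\log m)^{1/2}$ or $(\log m)^{1/\alpha}$ that is absent from the target. The crux --- exactly what the careful analysis behind \cite{Latala_SPL} provides --- is to show that such logarithmic factors are always dominated by $p^{1/2}\Vert A\Vert_F+p\,\Vert A\Vert_{l_2\to l_2}$ (intuitively, many rows force $\Vert A\Vert_F$ to be correspondingly large), so no spurious $\log m$ survives. Feeding the two sub-estimates into the displayed equivalence then yields the four-term bound $p^{1/2}\Vert A\Vert_F+p\,\Vert A\Vert_{l_2\to l_2}+p^{(\alpha+2)/2\alpha}\Vert A\Vert_{l_2\to l_\infty}+p^{2/\alpha}\Vert A\Vert_\infty$, with the matching lower bounds obtained by exhibiting a near-extremal configuration for each term.
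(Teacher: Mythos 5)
A preliminary remark: the paper never proves this statement. Lemma \ref{Lem_alpha_0} is imported verbatim, with attribution, as Example~3 of \cite{Latala_SPL} (Kolesko--Lata{\l}a), so there is no internal proof to compare yours against; what follows judges your argument on its own terms.

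The parts you carry out are correct. The moment growth $\Vert\xi\Vert_{L_q}\asymp_{\alpha}q^{1/\alpha}$, the lower bounds (extremal coordinate/row/entry, plus the spread-out regime, though your CLT step itself needs a blocking argument), and above all the conditional application of (i) plus Fubini giving the two-sided equivalence
\begin{align}
\Big\Vert \sum_{i,j}a_{ij}\xi_{i}\tilde{\xi}_{j}\Big\Vert_{L_{p}}\asymp_{\alpha}p^{1/2}\Big\Vert\,\Vert A\tilde{\xi}\Vert_{2}\Big\Vert_{L_{p}}+p^{1/\alpha}\Big\Vert\,\Vert A\tilde{\xi}\Vert_{\infty}\Big\Vert_{L_{p}}\nonumber
\end{align}
are all sound, and this reduction is indeed the natural starting point (no decoupling inequality is even needed, since (ii) is already stated for the decoupled chaos).

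The genuine gap is that both remaining upper bounds are deferred rather than proved, and the crucial one is deferred circularly: for the $\ell_\infty$ factor you appeal to ``the careful analysis behind \cite{Latala_SPL}'', i.e.\ to the very paper whose Example~3 is the statement under proof. Concretely, what must be shown is
\begin{align}
p^{1/\alpha}\Big\Vert\,\Vert A\tilde{\xi}\Vert_{\infty}\Big\Vert_{L_{p}}\lesssim_{\alpha}p^{1/2}\Vert A\Vert_{F}+p\Vert A\Vert_{l_{2}\to l_{2}}+p^{(\alpha+2)/2\alpha}\Vert A\Vert_{l_{2}\to l_{\infty}}+p^{2/\alpha}\Vert A\Vert_{\infty},\nonumber
\end{align}
and your stated plan --- first obtain a \emph{two-sided} closed-form estimate for $\Vert\,\Vert A\tilde{\xi}\Vert_{\infty}\Vert_{L_{p}}$, then substitute --- cannot be carried out literally, because no such closed form in the four matrix norms exists: for $A=I_{n}$ and $2\le p\ll\log n$ one has $\Vert\,\Vert A\tilde{\xi}\Vert_{\infty}\Vert_{L_{p}}\asymp_{\alpha}(\log n)^{1/\alpha}$, a logarithmic $n$-dependence that no combination $c_{1}p^{a}\Vert A\Vert_{F}+\cdots$ can reproduce. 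Only the displayed one-sided inequality is true, and it is exactly where the work lies: the absorption of the logarithms must survive the multiplication by $p^{1/\alpha}$, so in the test case $A=I_{n}$ one needs $p^{1/\alpha}(\log n)^{1/\alpha}\lesssim_{\alpha}p^{1/2}\sqrt{n}+p^{2/\alpha}$, i.e.\ an estimate of the form $(\log n)^{2/\alpha}\lesssim_{\alpha}\sqrt{n}$ obtained from the fact that $k$ comparable rows force $\Vert A\Vert_{F}\gtrsim\sqrt{k}\,\Vert A\Vert_{l_{2}\to l_{\infty}}$; making this rigorous requires sorting the rows ($\Vert a_{(k)}\Vert_{2}\le\Vert A\Vert_{F}/\sqrt{k}$), a stratified union bound, and a case split on $p$ versus the number of relevant rows. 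Note that the soft consequence of your stated intuition, $\textsf{E}\Vert A\tilde{\xi}\Vert_{\infty}\lesssim_{\alpha}\Vert A\Vert_{F}$, is \emph{not} sufficient: multiplied by $p^{1/\alpha}$ it exceeds $p^{1/2}\Vert A\Vert_{F}$ by the factor $p^{1/\alpha-1/2}$. A second, smaller circularity sits in your treatment of $\Vert\,\Vert A\tilde{\xi}\Vert_{2}\Vert_{L_{p}}$: the off-diagonal part of $\tilde{\xi}^{\top}A^{\top}A\tilde{\xi}$ is itself a Weibull chaos of the kind being estimated, so a ``Hanson--Wright-type bound for the light part'' cannot simply be quoted. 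This one is fixable with the paper's own toolkit --- Lemma \ref{Lem_comparison} (Lata{\l}a--Strzelecka) applied to $S=\{A^{\top}x: x\in B_{2}^{n}\}$ gives $\Vert\,\Vert A\tilde{\xi}\Vert_{2}\Vert_{L_{p}}\lesssim_{\alpha}\Vert A\Vert_{F}+p^{1/2}\Vert A\Vert_{l_{2}\to l_{2}}+p^{1/\alpha}\Vert A\Vert_{l_{2}\to l_{\infty}}$, which suffices --- but the analogous fix for the $\ell_\infty$ factor reproduces the term $p^{1/\alpha}\textsf{E}\Vert A\tilde{\xi}\Vert_{\infty}$, which is precisely the quantity whose absorption is the missing step.
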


\begin{myrem}\label{Rem_explanations}
	Observe that 
	\begin{align}
		\Vert A\Vert_{\infty}\le \Vert A\Vert_{l_{2}\to l_{\infty}}\le \Vert A\Vert_{l_{2}\to l_{2}}.\nonumber
	\end{align}
	Hence, \eqref{Eq_Lem_optimalbound_1} yields the following simplified bound
	\begin{align}
		\big\Vert \sum_{i, j}a_{ij}\xi_{i}\tilde{\xi}_{j}\big\Vert_{L_{p}}\lesssim_{\alpha} p^{1/2}\Vert A\Vert_{F}+p^{2/\alpha}\Vert A\Vert_{l_{2}\to l_{2}}.\nonumber
	\end{align}
\end{myrem}

\subsection{Contraction Principle}
In this subsection, we shall present a well-known contraction principle. 

\begin{mylem}[Lemma 4.6 in \cite{Ledoux_Talagrand_book}.]\label{Lem_contraction_principle}
	Let $F: \mathbb{R}^{+}\to \mathbb{R}^{+}$ be a convex function. Let further $\{\eta_{i}, i\le n\}$ and $\{\xi_{i}, i\le n\}$ be two symmetric sequences of independent random variables such that for some constant $K\ge 1$ and all $i\le n$ and $t>0$
	\begin{align}
		\textsf{P}\{\vert \eta_{i}\vert>t  \}\le K\textsf{P}\{\vert\xi_{i}\vert >t \}.\nonumber
	\end{align}
	Then, for any finite sequence $\{a_{i}, i\le n\}$ in a Banach space,
	\begin{align}
		\textsf{E}F\Big( \big\Vert \sum_{i=1}^{n}\eta_{i}a_{i}\big\Vert    \Big)\le \textsf{E}F\Big(K \big\Vert \sum_{i=1}^{n}\xi_{i}a_{i}\big\Vert   \Big).\nonumber
	\end{align}
	
\end{mylem}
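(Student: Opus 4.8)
The plan is to reduce the vector inequality to a one–dimensional comparison of moments of a single summand and then telescope. Since the conclusion depends only on the marginal laws, I may assume the families $\{\eta_i\}$ and $\{\xi_i\}$ are independent of each other, and I form the hybrid vectors $H_k=\sum_{i\le k}K\xi_i a_i+\sum_{i>k}\eta_i a_i$, so that $H_0=\sum_i\eta_i a_i$ and $H_n=K\sum_i\xi_i a_i$. It then suffices to prove $\textsf{E}F(\|H_{k-1}\|)\le\textsf{E}F(\|H_k\|)$ for each $k$, since chaining these $n$ inequalities yields the theorem. Writing $w=\sum_{i<k}K\xi_i a_i+\sum_{i>k}\eta_i a_i$, which is independent of the pair $(\eta_k,\xi_k)$, and conditioning on $w$, this reduces to the single–coordinate statement: for symmetric independent $\eta,\xi$ with $\textsf{P}\{|\eta|>t\}\le K\textsf{P}\{|\xi|>t\}$, a fixed vector $a$ and a fixed $w$,
\[
\textsf{E}_\eta F(\|w+\eta a\|)\le \textsf{E}_\xi F(\|w+K\xi a\|).
\]

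For the single coordinate I would first use symmetry to remove the sign. Writing $g(x)=\tfrac12\big(F(\|w+xa\|)+F(\|w-xa\|)\big)$, which is an even function, the relation $\eta\stackrel{d}{=}-\eta$ gives $\textsf{E}_\eta F(\|w+\eta a\|)=\textsf{E}\,g(\eta)=\textsf{E}\,g(|\eta|)$, and likewise $\textsf{E}_\xi F(\|w+K\xi a\|)=\textsf{E}\,g(K|\xi|)$. The key structural fact is that $g$ is nondecreasing and convex on $[0,\infty)$: each map $x\mapsto\|w\pm xa\|$ is convex, composing with the convex nondecreasing $F$ preserves convexity, the sum is convex, and evenness forces monotonicity on the half–line. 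Hence $g'\ge0$ is nondecreasing, and the claim becomes $\textsf{E}g(|\eta|)\le\textsf{E}g(K|\xi|)$ for such $g$.

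Extracting the factor $K$ is the one genuinely delicate point, and it is worth stressing that it cannot be obtained by a naive stochastic domination of the summands: the tail hypothesis does not give $|\eta|\preceq K|\xi|$, and when $\textsf{P}\{|\eta|>t\}=1>\textsf{P}\{|\xi|>t\}$ it also rules out dominating $|\eta|$ by a maximum of finitely many copies of $|\xi|$. Instead I would argue analytically in two moves. After normalising $g(0)=0$ (a harmless additive constant, identical on both sides), the layer–cake identity $\textsf{E}g(|\eta|)=\int_0^\infty g'(s)\textsf{P}\{|\eta|>s\}\,ds$ combined with the tail hypothesis gives $\textsf{E}g(|\eta|)\le K\,\textsf{E}g(|\xi|)$; and convexity with $g(0)=0$ gives the pointwise bound $Kg(y)\le g(Ky)$ for $K\ge1$, whence $K\,\textsf{E}g(|\xi|)=\textsf{E}\,[Kg(|\xi|)]\le\textsf{E}\,g(K|\xi|)$. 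Combining the two yields the single–coordinate inequality, and the telescoping completes the proof. The only hypothesis I lean on beyond what is literally stated is that $F$ is nondecreasing (automatic for the convex $F$ of interest, such as $t\mapsto t^p$ or $t\mapsto e^{\lambda t}$), which is precisely what makes $g$ convex.
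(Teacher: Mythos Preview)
The paper does not prove this lemma: it is quoted verbatim as Lemma~4.6 of Ledoux--Talagrand and used as a black box (the only thing proved nearby is Corollary~\ref{Cor_contraction_principle}, which applies the lemma). So there is no ``paper's own proof'' to compare against.

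That said, your argument is sound and is essentially the standard one. The Lindeberg replacement reducing to a single coordinate, the symmetrisation $g(x)=\tfrac12\big(F(\|w+xa\|)+F(\|w-xa\|)\big)$, the observation that an even convex function is nondecreasing on $[0,\infty)$, and the two-step analytic move (layer cake to produce the factor $K$ in front of the expectation, then $Kg(y)\le g(Ky)$ from convexity and $g(0)=0$ to push $K$ inside) are all correct. Your remark that the tail hypothesis does \emph{not} yield stochastic domination $|\eta|\preceq K|\xi|$ is also well taken and shows you understand why the naive route fails.

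One point deserves flagging. You correctly note at the end that you need $F$ to be nondecreasing in order to conclude that $x\mapsto F(\|w\pm xa\|)$ is convex; the lemma as stated here says only ``convex'', which is how Ledoux--Talagrand phrase it. Since every application in the present paper (and in Ledoux--Talagrand) is to nondecreasing $F$ such as $t\mapsto t^{p}$, this is a harmless and honest caveat rather than a gap in your proof.
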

\begin{mycor}\label{Cor_contraction_principle}
	Let $T$ be a nonempty subset of $\mathbb{R}^{n}$. $\{\eta_{i}\}$ and $\{\xi_{i}\}$ are independent random variables as in Lemma \ref{Lem_contraction_principle}. Then, we have for $p\ge 1$
	\begin{align}
		\textsf{E}\sup_{t\in T}\big\vert \sum_{i=1}^{n}\eta_{i}t_{i} \big\vert^{p}\le K^{p} \textsf{E} \sup_{t\in T}\big\vert \sum_{i=1}^{n}\xi_{i}t_{i} \big\vert^{p}.\nonumber
	\end{align}
\end{mycor}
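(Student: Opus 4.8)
The plan is to deduce the corollary directly from Lemma~\ref{Lem_contraction_principle} by choosing the Banach space and the convex function appropriately. First I would take $F(x)=x^{p}$, which is convex on $\mathbb{R}^{+}$ for every $p\ge 1$, and realize the supremum over $T$ as a Banach-space norm. Concretely, let $B$ denote the space of bounded real-valued functions on $T$ equipped with the supremum norm $\Vert f\Vert=\sup_{t\in T}\vert f(t)\vert$, and define vectors $a_{i}\in B$ by $a_{i}(t)=t_{i}$, i.e.\ $a_{i}$ records the $i$-th coordinate across all points of $T$. With this identification, $\sum_{i=1}^{n}\eta_{i}a_{i}$ is precisely the (random) function $t\mapsto\sum_{i=1}^{n}\eta_{i}t_{i}$, so that $\Vert\sum_{i=1}^{n}\eta_{i}a_{i}\Vert=\sup_{t\in T}\vert\sum_{i=1}^{n}\eta_{i}t_{i}\vert$, and similarly for $\{\xi_{i}\}$.

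Next I would apply Lemma~\ref{Lem_contraction_principle} with this $B$ and this $F$. The hypotheses of the lemma, namely symmetry of the two sequences and the tail comparison $\textsf{P}\{\vert\eta_{i}\vert>t\}\le K\,\textsf{P}\{\vert\xi_{i}\vert>t\}$, are exactly those assumed in the statement of the corollary, and the sequence $\{a_{i}\}$ is finite ($i\le n$), so the lemma applies. Its conclusion reads
\begin{align}
	\textsf{E}\Big(\sup_{t\in T}\big\vert\sum_{i=1}^{n}\eta_{i}t_{i}\big\vert\Big)^{p}\le \textsf{E}\Big(K\sup_{t\in T}\big\vert\sum_{i=1}^{n}\xi_{i}t_{i}\big\vert\Big)^{p}=K^{p}\,\textsf{E}\Big(\sup_{t\in T}\big\vert\sum_{i=1}^{n}\xi_{i}t_{i}\big\vert\Big)^{p},\nonumber
\end{align}
where in the last step the constant $K$ is pulled out of the $p$-th power. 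This is exactly the claimed inequality.

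The one point requiring care, and the step I expect to be the main obstacle, is that when $T$ is infinite the space $B$ is infinite-dimensional and the supremum $\sup_{t\in T}\vert\sum_{i}\eta_{i}t_{i}\vert$ need not be measurable or attained, so it is not immediate that Lemma~\ref{Lem_contraction_principle} applies verbatim. I would resolve this by a standard reduction to finite subsets: the inequality first holds for every finite $T_{0}\subset T$, where $B$ is finite-dimensional and all quantities are genuinely measurable; one then chooses an increasing sequence of finite subsets exhausting $T$ and passes to the limit, using that the outer supremum over $T$ is the monotone limit of suprema over these finite sets together with monotone convergence. Since this limiting procedure leaves the multiplicative constant $K^{p}$ untouched, the final bound is preserved, completing the argument.
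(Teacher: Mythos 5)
Your proposal is correct and follows essentially the same route as the paper: both proofs deduce the corollary from Lemma \ref{Lem_contraction_principle} with $F(x)=x^{p}$, after realizing $\sup_{t\in T}\big\vert \sum_{i=1}^{n}\eta_{i}t_{i}\big\vert$ as the norm of $\sum_{i=1}^{n}\eta_{i}a_{i}$ in a suitable Banach space. The only divergence is the choice of that space and the attendant limiting argument. The paper stays in $\mathbb{R}^{n}$, setting $\Vert x\Vert=\sup_{t\in T}\vert\sum_{i}x_{i}t_{i}\vert$ and $a_{i}=e_{i}$; since this is merely a seminorm when $\text{span}(T)\neq\mathbb{R}^{n}$, it first replaces $T$ by $T_{\delta}=T\cup\delta B_{2}^{n}$ and lets $\delta\to 0$. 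You instead work in the space of bounded functions on $T$ with the supremum norm, taking $a_{i}(t)=t_{i}$, which gives a genuine norm but an infinite-dimensional (possibly nonseparable) space, and you repair this by proving the bound for finite subsets of $T$ and passing to the limit. Both devices are sound; the one refinement your version needs is that for uncountable $T$ the exhaustion should run through a countable dense subset of $T$, which suffices because $t\mapsto\big\vert\sum_{i}\xi_{i}t_{i}\big\vert$ is continuous in $t$, so the supremum over $T$ equals the supremum over that countable subset and monotone convergence applies as you describe.
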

\begin{proof}
	We first assume that $\text{span}(T)=\mathbb{R}^{n}$. Then, define the following norm on $\mathbb{R}^{n}$:
	\begin{align}
		\Vert x\Vert=\sup_{t\in T}\big\vert \sum_{i=1}^{n}x_{i}t_{i}\big\vert,\quad x\in\mathbb{R}^{n}.\nonumber
	\end{align}
	Let $e_{i}, i\ge 1$ be vectors in $\mathbb{R}^{n}$ such that the $i$-th coordinate is $1$ and the other coordinates are $0$. Then Lemma \ref{Lem_contraction_principle} yields that
	\begin{align}
		\textsf{E}F\Big( \big\Vert \sum_{i=1}^{n}\eta_{i}e_{i}\big\Vert    \Big)\le \textsf{E}F\Big(K \big\Vert \sum_{i=1}^{n}\xi_{i}e_{i}\big\Vert   \Big).\nonumber
	\end{align}
	We obtain the desired result by taking $F(\cdot)=\vert\cdot\vert^{p}$.
	
	As for a general $T$, we let $T_{\delta}=T\cup \delta B_{2}^{n}$. Note that, $\text{span}(T_{\delta})=\mathbb{R}^{n}$. Hence, Corollary \ref{Cor_contraction_principle} is valid for $T_{\delta}$. Then, go with $\delta\to 0$ and obtain the desired result. 
\end{proof}

\subsection{Comparison of Weak and Strong Moments }\label{Section_majorizing_measur_theorem}

Let $\{S_{t}=\sum t_{i}\xi : t\in T \}$ be a canonical process. There is a trivial lower estimate:
\begin{align}
	\big\Vert \sup_{t\in T} \vert S_{t}\vert\big\Vert_{L_p}\ge \max\Big\{ \textsf{E}\sup_{t\in T}\vert S_{t}\vert , \sup_{t\in T}\Vert S_{t}\Vert_{L_p}    \Big\}.\nonumber
\end{align}
In some situations, the lower bound may be reversed:
\begin{align}\label{Eq_comparison_result}
	\big\Vert \sup_{t\in T} \vert S_{t}\vert\big\Vert_{L_p}\lesssim  \textsf{E}\sup_{t\in T}\vert S_{t}\vert + \sup_{t\in T}\Vert S_{t}\Vert_{L_p}.    
\end{align}
We call a result of the form \eqref{Eq_comparison_result} a comparison of weak and strong moments. In this subsection, we shall  introduce a comparison of weak and strong moments (maybe the most general version currently), which was proved by Latała and Strzelecka \cite{Latala_mathematika}.

\begin{mylem}[Theorem 1.1 in \cite{Latala_mathematika}]\label{Lem_comparison}
	Let $\xi_{1}, \cdots, \xi_{n}$ be independent centered random variables with finite moments satisfying
	\begin{align}
		\Vert \xi_{i}\Vert_{L_{2p}}\le \alpha\Vert \xi_{i}\Vert_{L_p}, \quad \forall p\ge 2,\quad i=1,\cdots, n,\nonumber
	\end{align}
	where $\alpha$ is a finite positive constant. Then we have for any $p\ge 1$ and any non-empty set $T\subset \mathbb{R}^{n}$
	\begin{align}
		\big\Vert \sup_{t\in T} \vert \sum_{i=1}^{n}t_{i}\xi_{i}\vert\big\Vert_{L_p}\lesssim_{\alpha}  \textsf{E}\sup_{t\in T}\vert \sum_{i=1}^{n}t_{i}\xi_{i}\vert + \sup_{t\in T}\Vert \sum_{i=1}^{n}t_{i}\xi_{i}\Vert_{L_p}. \nonumber   
	\end{align}
\end{mylem}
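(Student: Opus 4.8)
The plan is to prove the nontrivial upper bound, the matching lower bound being immediate from Jensen's inequality. Write $S_t=\sum_{i=1}^{n}t_i\xi_i$ and $W=\sup_{t\in T}\vert S_t\vert$. Two reductions cost only universal constants. First, approximating $T$ from inside by finite subsets and passing to the monotone limit, I may assume $T$ is finite, which removes all measurability issues. Second, a standard symmetrization replaces $\xi_i$ by $\xi_i-\tilde\xi_i$, with $\tilde\xi_i$ independent copies: centering and Jensen give $\textsf{E}W^p\asymp\textsf{E}\sup_{t\in T}\vert\sum_i t_i(\xi_i-\tilde\xi_i)\vert^p$, while $\Vert\xi_i-\tilde\xi_i\Vert_{L_{2p}}\le 2\alpha\Vert\xi_i\Vert_{L_p}\le 2\alpha\Vert\xi_i-\tilde\xi_i\Vert_{L_p}$, so the moment-regularity hypothesis survives with a larger constant and I may assume the $\xi_i$ are symmetric. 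Iterating the hypothesis yields $\Vert\xi_i\Vert_{L_{2^k p}}\le\alpha^k\Vert\xi_i\Vert_{L_p}$; by the reasoning behind Lemma \ref{Lem_moments1} this forces genuine tail decay above the level $\Vert\xi_i\Vert_{L_p}$, and this controlled growth of moments is exactly what will make the forthcoming truncation argument summable.

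The engine is a decomposition of each coordinate into dyadic magnitude shells relative to its own strong moment. Fix $p$, set $m_i=\Vert\xi_i\Vert_{L_p}$, and write $\xi_i=\sum_{k\ge 0}\xi_i^{(k)}$ with $\xi_i^{(0)}=\xi_i\mathbf{1}_{\{\vert\xi_i\vert\le m_i\}}$ and $\xi_i^{(k)}=\xi_i\mathbf{1}_{\{2^{k-1}m_i<\vert\xi_i\vert\le 2^k m_i\}}$ for $k\ge 1$; each shell is again symmetric. Setting $W^{(k)}=\sup_{t\in T}\vert\sum_i t_i\xi_i^{(k)}\vert$, the triangle inequality in $L_p$ reduces the problem to bounding $\Vert W^{(k)}\Vert_{L_p}$ and summing. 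On the $k$-th shell $\vert\xi_i^{(k)}\vert\le 2^k m_i$, so Corollary \ref{Cor_contraction_principle} (conditioning on $\vert\xi_i^{(k)}\vert$ and inserting independent signs) dominates the shell process by a Rademacher process whose coefficients are bounded by $2^k m_i$. For such a bounded symmetric process I would invoke a Talagrand/Bousquet-type concentration inequality for suprema of sums of independent bounded vectors, which for $p\ge 1$ gives a bound of the form $\textsf{E}W^{(k)}+\sqrt{p}\,\Sigma_k+p\,B_k$, where $\Sigma_k^2=\sup_{t\in T}\sum_i t_i^2\,\textsf{E}(\xi_i^{(k)})^2$ is a weak-variance proxy and $B_k\asymp 2^k\sup_{t\in T}\max_i\vert t_i\vert m_i$ bounds the individual increments.

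The final step — collapsing $\sum_k\big(\textsf{E}W^{(k)}+\sqrt{p}\,\Sigma_k+p\,B_k\big)$ onto exactly $\textsf{E}W+\sup_{t\in T}\Vert S_t\Vert_{L_p}$ with a constant depending only on $\alpha$ — is where the real difficulty lies, and where the regularity hypothesis is indispensable. The weak pieces $\sum_k\textsf{E}W^{(k)}$ recombine into $\textsf{E}W$ by undoing the contraction. For the fluctuation pieces I would use the two robust lower bounds $\sup_{t\in T}\Vert S_t\Vert_{L_p}\gtrsim\sup_{t\in T}\max_i\vert t_i\vert m_i$ and $\sup_{t\in T}\Vert S_t\Vert_{L_p}\ge\sup_{t\in T}(\sum_i t_i^2\,\textsf{E}\xi_i^2)^{1/2}$, both valid for symmetric independent summands by Jensen. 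Taken in isolation, the terms $\sqrt{p}\,\Sigma_k$ and $p\,B_k$ each carry spurious powers of $p$; the crucial point is that the probability a coordinate reaches the $k$-th shell decays like $\exp(-c\,2^{k\beta}p)$ for some $\beta=\beta(\alpha)>0$, by the tail control extracted in the first paragraph, so both $\textsf{E}(\xi_i^{(k)})^2$ and the effective number of active coordinates in shell $k$ shrink fast enough to beat the $2^k$ growth of $B_k$ and to convert the $\sqrt{p}$ and $p$ factors into the single quantity $\sup_{t\in T}\Vert S_t\Vert_{L_p}$. Carrying out this bookkeeping uniformly in $p$ and in $T$ — with no recourse to a majorizing measure theorem, so that all the uniformity over $T$ must be supplied by $\textsf{E}W$ and $\sup_{t\in T}\Vert S_t\Vert_{L_p}$ alone — is the main obstacle; once the shells are summed, combining the three steps yields the claimed comparison.
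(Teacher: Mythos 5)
You should first note what the target of comparison is here: the paper does not prove Lemma \ref{Lem_comparison} at all --- it imports it verbatim as Theorem 1.1 of \cite{Latala_mathematika}. In that paper the result is \emph{not} obtained by truncation plus concentration; its proof runs through the Bednorz--Lata{\l}a structure theorem for Bernoulli processes (one symmetrizes, conditions on the moduli $\vert\xi_i\vert$ to obtain a Bernoulli process, and then uses the decomposition $T\subset T_1+T_2$ of the index set furnished by that theorem, together with the moment-regularity hypothesis to undo the conditioning). Your reductions (finite $T$, symmetrization preserving the hypothesis, the two lower bounds $\sup_{t}\Vert S_t\Vert_{L_p}\gtrsim\sup_t\max_i\vert t_i\vert m_i$ and $\sup_t\Vert S_t\Vert_{L_p}\ge\sup_t(\sum_i t_i^2\textsf{E}\xi_i^2)^{1/2}$) are all correct, but they are the easy part.

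The genuine gap is the concentration step, and it cannot be repaired by your shell bookkeeping. A Talagrand/Bousquet-type inequality for the bounded shell processes gives $\Vert W^{(k)}\Vert_{L_p}\lesssim \textsf{E}W^{(k)}+\sqrt{p}\,\Sigma_k+p\,B_k$, and your mechanism for killing the spurious powers of $p$ --- the shell probability $\exp(-c\,2^{k\beta}p)$ --- is simply absent at $k=0$: the base shell $\{\vert\xi_i\vert\le m_i\}$ carries no small-probability factor, and your lower bounds only yield $\Sigma_0\le\sup_t\Vert S_t\Vert_{L_p}$ and $B_0\le\sup_t\Vert S_t\Vert_{L_p}$, leaving an error of order $(\sqrt{p}+p)\sup_t\Vert S_t\Vert_{L_p}$ rather than $\sup_t\Vert S_t\Vert_{L_p}$. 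This loss is intrinsic to the inequality you invoke, not to the summation: take $\xi_i$ Rademacher and $T=\{e_1\}$, so that $W\equiv 1$ and $\textsf{E}W+\sup_t\Vert S_t\Vert_{L_p}\asymp 1$, while $\sqrt{p}\,\Sigma_0+p\,B_0\asymp p$; the concentration bound is a valid but lossy upper estimate, so any proof routed through it must produce a constant growing with $p$. In other words, closing shell $0$ requires exactly the statement you are trying to prove for bounded symmetric variables, which is where the Bernoulli theorem (a decomposition of $T$, not a variance/sup-norm bound) is genuinely needed. Two secondary gaps point the same way: $\textsf{E}W^{(k)}\le\textsf{E}W$ holds for each $k$ by contraction, but you need geometric decay in $k$ to sum, and extracting it from the shell probabilities via a union bound over coordinates introduces a factor depending on $n$, which the theorem does not permit.
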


\begin{myrem}
	Let $\xi_{1}\sim\mathcal{W}_{s}(\alpha), 0<\alpha\le 1$. A direct integration yields for $p\ge 1$
	\begin{align}
		\textsf{E}\vert\xi_{1}\vert^{p}=\frac{p}{\alpha}\Gamma(\frac{p}{\alpha}),\nonumber
	\end{align}
	where  $\Gamma(\cdot)$ is the Gamma function. Hence, $\xi_{1}$ satisfies the condition in Lemma \ref{Lem_comparison}.
\end{myrem}

\subsection{Decoupling Inequality}
Let $S_{A}=\sum_{i, j\le n}a_{ij}\xi_{i}\xi_{j}$, where $\xi_{i}$ are independent centered random variables and $A=(a_{ij})$ is a fixed matrix. The decoupling technique is applied to control $S_{A}$ by its decoupled version $\tilde{S}_{A}$ of the form: $\tilde{S}_{A}=\sum_{i, j\le n}a_{ij}\xi_{i}\tilde{\xi}_{j}$. We next introduce a decoupling inequality.

\begin{mylem}[Proposition 3.1 in \cite{Dai_logconcave_RIP}]\label{Lem_decoupling}
	Let $F$ be a convex function satisfying $F(x)=F(-x), \forall x\in \mathbb{R}$.  Assume $\xi=(\xi_{1}, \cdots, \xi_{n})^\top$ and $ \eta=(\eta_{1}, \cdots, \eta_{n})^\top$ are random vectors with independent centered entries. Let for any $t>0$ and $i\ge 1$, the independent random variables $\xi_{i}, \eta_{i}$ satisfy for  some $c\ge 1$
	\begin{align}\label{Condition_tail_decoupling}
		\textsf{P}\{ \xi_{i}^{2}\ge t \}\le c\textsf{P}\{ c\vert \eta_{i}\tilde{\eta}_{i}\vert\ge t \},
	\end{align}
	where $\tilde{\eta}_{i}$ is an independent copy of $\eta_{i}$. 
	Then, there exists a constant $C$ depending only on $c$ such that
	\begin{align}
		\textsf{E}\sup_{A\in \mathcal{A}}F\big(\xi^\top A\xi-\textsf{E}\xi^\top A\xi\big)\le \textsf{E}\sup_{A\in \mathcal{A}}F\big( C\eta^\top A\tilde{\eta}\big)\nonumber
	\end{align}
	where  $\mathcal{A}$ is a set of $n\times n$ fixed matrices.
\end{mylem}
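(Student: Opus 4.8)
The plan is to move the coupled, centred quadratic form $\xi^\top A\xi-\textsf{E}\xi^\top A\xi$ onto the fully decoupled form $C\eta^\top A\tilde\eta$ in three moves: a symmetrization that removes the mean, a device that keeps the diagonal alive while the off-diagonal part is decoupled, and a contraction step governed by \eqref{Condition_tail_decoupling} that trades the $\xi$'s for $\eta$'s. The supremum over $\mathcal A$ is never an obstruction: since $F$ is convex, each time I express a quantity as a conditional average and apply Jensen's inequality I may use $\sup_{A}F(\textsf{E}[\,\cdot\,])\le\textsf{E}\sup_{A}F(\,\cdot\,)$, so $\sup_{A\in\mathcal A}$ rides through every step.

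First I would symmetrize with an independent copy $\xi'$ of $\xi$. As $\textsf{E}\xi'^\top A\xi'=\textsf{E}\xi^\top A\xi$, Jensen's inequality gives $\textsf{E}\sup_{A}F(\xi^\top A\xi-\textsf{E}\xi^\top A\xi)\le\textsf{E}\sup_{A}F(\xi^\top A\xi-\xi'^\top A\xi')$, which erases the mean (in particular the diagonal term $\sum_i a_{ii}\textsf{E}\xi_i^2$) and produces the symmetric variables $u_i=\xi_i-\xi_i'$. The key structural step is then the polarization $\xi_i\xi_j-\xi_i'\xi_j'=\tfrac12[(\xi_i-\xi_i')(\xi_j+\xi_j')+(\xi_i+\xi_i')(\xi_j-\xi_j')]$, which rewrites the symmetrized form as $\tfrac12(u^\top Av+v^\top Au)$ with $v_i=\xi_i+\xi_i'$. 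Its virtue is that the diagonal is now $a_{ii}u_iv_i=a_{ii}(\xi_i^2-\xi_i'^2)$, a product of two \emph{distinct} factors rather than the square $\xi_i^2$; thus the diagonal behaves like an off-diagonal term and no longer blocks decoupling. Since $(\xi_i,\xi_i')$ is exchangeable, $(u_i,v_i)\stackrel{d}{=}(-u_i,v_i)$, so independent Rademacher signs may be attached to the $u_i$, putting the whole bilinear form (diagonal included) into the standard shape in which the two factors can be decoupled into independent families.

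Next I would invoke \eqref{Condition_tail_decoupling}, which conveniently supplies the two tail comparisons needed for the swap. For the single factors I would read it at $t=s^2$ and use $\{|\eta_i\tilde\eta_i|>s^2/c\}\subset\{|\eta_i|>s/\sqrt c\}\cup\{|\tilde\eta_i|>s/\sqrt c\}$ to obtain $\textsf{P}\{|\xi_i|>s\}\le 2c\,\textsf{P}\{|\sqrt c\,\eta_i|>s\}$; since $|u_i|,|v_i|\le|\xi_i|+|\xi_i'|$, each of $u_i,v_i$ is then tail-dominated by a fixed multiple of $\eta_i$. For the diagonal the hypothesis is used as stated, comparing $\xi_i^2$ (hence $|u_iv_i|=|\xi_i^2-\xi_i'^2|\le\xi_i^2+\xi_i'^2$) with $c|\eta_i\tilde\eta_i|$. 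After the symmetrizations required to meet the hypothesis of Lemma \ref{Lem_contraction_principle}, applying that lemma (or Corollary \ref{Cor_contraction_principle}) conditionally — one factor at a time, with the other frozen — replaces each product $u_iv_j$ by $\eta_i\tilde\eta_j$ at the cost of a constant depending only on $c$. Collecting the pieces into $\sum_{i,j}a_{ij}\eta_i\tilde\eta_j=\eta^\top A\tilde\eta$ and absorbing all absolute constants into one $C=C(c)$ completes the argument.

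I expect the main obstacle to be the diagonal, and the point of the hypothesis is precisely to handle it. A naive split of the centred form into the off-diagonal chaos $\sum_{i\ne j}a_{ij}\xi_i\xi_j$ and the diagonal fluctuation $\sum_i a_{ii}(\xi_i^2-\textsf{E}\xi_i^2)$ is unsatisfactory: a selector decoupling sees only the off-diagonal part, and the diagonal cannot be routed through the decoupled entry $\xi_i\tilde\xi_i$ either, because $\xi_i^2-\xi_i'^2$ is not tail-dominated by $\xi_i\tilde\xi_i$ (take $\tilde\xi_i\approx0$). Passing instead through the product $u_iv_i$ keeps the diagonal intact and lets \eqref{Condition_tail_decoupling} — tailored to compare $\xi_i^2$ with $\eta_i\tilde\eta_i$ — carry it directly to the decoupled diagonal $\eta_i\tilde\eta_i$. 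The remaining difficulty is bookkeeping: arranging the symmetrizations so that the sequences fed to Lemma \ref{Lem_contraction_principle} are genuinely symmetric, and checking that the product-level tail bound yields exactly the single-variable and product comparisons used above.
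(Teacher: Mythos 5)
The paper itself does not prove Lemma \ref{Lem_decoupling}; it imports it verbatim from Proposition 3.1 of \cite{Dai_logconcave_RIP}, so your attempt has to be judged on its own terms, and it has a genuine gap at its central step. The symmetrization and the polarization identity are correct, but the key assertion --- that $\tfrac12(u^\top Av+v^\top Au)$ with $u=\xi-\xi'$, $v=\xi+\xi'$ is now ``in the standard shape in which the two factors can be decoupled into independent families'' --- is unsupported, and no standard result delivers it. The vectors $u$ and $v$ are \emph{dependent}: for each $i$ the pair $(u_i,v_i)$ is a function of the same source $(\xi_i,\xi_i')$ (they are independent only in the Gaussian case, which is exactly why polarization decouples Gaussian diagonals). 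All classical decoupling devices (the selector/Jensen argument, de la Pe\~na--Gin\'e decoupling) apply only to off-diagonal sums $\sum_{i\neq j}h_{ij}(w_i,w_j)$ of independent $w_i$'s; a diagonal term $b_{ii}u_iv_i$, being a function of the single independent source $w_i=(\xi_i,\xi_i')$, cannot be decoupled by convexity arguments. Writing $\xi_i^2-\xi_i'^2$ as a product of two dependent factors re-parametrizes the obstruction; it does not remove it. The same dependence makes your contraction step circular: applying Lemma \ref{Lem_contraction_principle} ``one factor at a time, with the other frozen'' means conditioning on $v$, and conditioning on $v$ changes the law of $u$ (the tail bounds you derived from \eqref{Condition_tail_decoupling} hold unconditionally, not conditionally on $v$). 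Contraction with one side frozen is legitimate precisely when the two sides are already independent families, i.e.\ \emph{after} decoupling; it cannot be the mechanism that produces the decoupling.

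Moreover, the route you dismiss as ``unsatisfactory'' is essentially the one that works, and your reason for dismissing it rests on the wrong comparison: nobody needs the diagonal fluctuation to be dominated by $\xi_i\tilde\xi_i$ (your ``take $\tilde\xi_i\approx 0$'' objection); hypothesis \eqref{Condition_tail_decoupling} supplies domination by the \emph{independent, centered} products $\eta_i\tilde\eta_i$, since $|\xi_i^2-\xi_i'^2|\le\max(\xi_i^2,\xi_i'^2)$ gives $\textsf{P}\{|\xi_i^2-\xi_i'^2|\ge t\}\le 2c\,\textsf{P}\{c|\eta_i\tilde\eta_i|\ge t\}$. A workable proof: symmetrize; split off the diagonal by convexity, $F(x+y)\le\tfrac12F(2x)+\tfrac12F(2y)$; decouple the off-diagonal chaos by the selector/Jensen argument (this survives both $\sup_{A\in\mathcal A}$ and the convex $F$, and lands on the full decoupled form because the missing terms are conditionally centered); then pass from $\xi$'s to $\eta$'s by \emph{conditional} contraction, using the single-variable bound $\textsf{P}\{|\xi_i|>s\}\le2c\,\textsf{P}\{\sqrt c\,|\eta_i|>s\}$ you correctly derived; and bound the diagonal sum $\sum_i a_{ii}(\xi_i^2-\xi_i'^2)$ by contraction against the products $\eta_i\tilde\eta_i$. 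Two pieces of bookkeeping your sketch also skips are genuinely needed here: Lemma \ref{Lem_contraction_principle} requires \emph{both} sequences symmetric, while $\eta_i$ and $\eta_i\tilde\eta_i$ are only centered, so one must insert Rademacher signs and remove them at the very end via the symmetrization inequalities for convex functions of sums of independent centered vectors (Lemma 6.3 in \cite{Ledoux_Talagrand_book}) --- not via a tail comparison between $\eta_i-\eta_i'$ and $\eta_i$, which can fail with uniform constants; and the separately bounded diagonal piece must be re-embedded into the single quantity $\sup_{A}F(C\eta^\top A\tilde\eta)$ by a conditional Jensen step, which again works only for the sign-symmetrized form. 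In short, your single-variable tail derivation and the role you assign to the hypothesis are right, but the polarization shortcut at the heart of the proposal does not yield a decoupling, and any repair of it collapses back onto the diagonal/off-diagonal splitting you rejected.
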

\begin{myrem}\label{Rem_2}
	Let $\xi_{i}$ be independent centered $\alpha$-subexponential variables and  $\eta_{1}, \cdots, \eta_{n}\stackrel{i.i.d.}{\sim}\mathcal{W}_{s}(\alpha)$. Note that for $i\le n$ and $t\ge 0$
	\begin{align}
		\textsf{P}\{\xi_{i}^{2}\ge t  \}\le ce^{-ct^{\alpha/2}}\le c\textsf{P}\{ \vert \eta_{i}\tilde{\eta}_{i}\vert \ge (\frac{c}{2})^{2/\alpha}t \}\le c_{1}\textsf{P}\{c_{1}\vert\eta_{i}\tilde{\eta}_{i}\vert \ge t \},\nonumber
	\end{align}
	where $c_{1}=\max\{c, (2/c)^{2/\alpha}  \}$ and $\tilde{\eta}_{i}$ is an independent copy of $\eta_{i}$. Hence, $\xi_{i}$ and $\eta_{i}$ mentioned above satisfy the condition in Lemma \ref{Lem_decoupling}.
\end{myrem}

\section{Proof for Theorem \ref{Theo_main}}

In this section, we shall prove our main result, a uniform Hanson-Wright type deviation inequality. For convenience of reading, we will divide the proof of the theorem into two parts. We first prove the following proposition, a $p$-th moment bound for the suprema of 
decoupled bilinear random forms.  

\begin{mypro}\label{Prop_pthmoments}
	Let $\zeta_{1},\cdots,  \zeta_{n}\stackrel{\text{i.i.d}}{\sim}\mathcal{W}_{s}(\alpha)$, $0<\alpha\le 1$. Then, we have for $p\ge 1$
	\begin{align}\label{Eq_Theo1.3_zong}
		\Big\Vert \sup_{A\in \mathcal{A}}\vert \zeta^\top A^\top A\tilde{\zeta}\vert\Big\Vert_{L_{p}}\lesssim_{\alpha}\sup_{A\in \mathcal{A}}\Vert \zeta^\top A^\top A\tilde{\zeta}\Vert_{L_{p}}+\Big\Vert\sup_{A\in\mathcal{A}} \Vert A\tilde{\zeta}\Vert_{2}\Big\Vert_{L_{p}}\big(\gamma_{2}(\mathcal{A}, \Vert\cdot\Vert_{l_{2}\to l_{2}})+\gamma_{\alpha}(\mathcal{A}, \Vert\cdot\Vert_{l_{2}\to l_{\infty}}) \big),
	\end{align}
	where $\zeta=(\zeta_{1},\cdots, \zeta_{n})^\top$ and $\tilde{\zeta}$ is an independent copy of $\zeta$. 
\end{mypro}

\begin{proof}
	For convienence, denote by $d_{2}$ and $d_{\infty}$ the distance on $\mathcal{A}$ induced by $\Vert\cdot\Vert_{l_{2}\to l_{2}}$ and $\Vert\cdot\Vert_{l_{2}\to l_{\infty}}$. By the definition of $\gamma^{\prime}_{\alpha}$ functionals, we can select two admissible sequences of partitions $\mathcal{T}^{(1)}=(\mathcal{T}^{(1)}_{n})_{n\ge 0}$ and $\mathcal{T}^{(2)}=(\mathcal{T}^{(2)}_{n})_{n\ge 0}$ of $\mathcal{A}$ such that
	\begin{align}
		\sup_{A\in \mathcal{A}}\sum_{n\ge 0}2^{n/2}\Delta_{d_{2}}(T_{n}^{(1)}(A))\le 2\gamma_{2}^{\prime}(\mathcal{A}, d_{2}),\quad \sup_{A\in \mathcal{A}}\sum_{n\ge 0}2^{n/\alpha}\Delta_{d_{\infty}}(T_{n}^{(2)}(A))\le 2\gamma_{\alpha}^{\prime}(\mathcal{A}, d_{\infty}).\nonumber
	\end{align}
	Let $\mathcal{T}_{0}=\{\mathcal{A}\}$ and 
	\begin{align}
		\mathcal{T}_{n}=\{T^{(1)}\cap T^{(2)}:  T^{(1)}\in \mathcal{T}^{(1)}_{n-1}, T^{(2)}\in \mathcal{T}^{(2)}_{n-1}   \},\quad n\ge 1\nonumber.
	\end{align}
	Then, $\mathcal{T}=(\mathcal{T}_{n})_{n\ge 0}$ is a sequence of  increasing partitions  and 
	\begin{align}
		\vert \mathcal{T}_{n}\vert\le \vert \mathcal{T}^{(1)}_{n-1}\vert\vert \mathcal{T}^{(2)}_{n-1}\vert\le 2^{2^{n-1}}2^{2^{n-1}}=2^{2^{n}}.\nonumber
	\end{align}
	We next define a subset $\mathcal{A}_{n}$ of $\mathcal{A}$ by selecting exactly one point from each $T\in \mathcal{T}_{n}$. By this means, we build an admissible sequence $(\mathcal{A}_{n})_{n\ge 0}$ of subsets of $\mathcal{A}$. Let $\pi=\{ \pi_{r}, r\ge 0\}$ be a sequence of functions $\pi_{n}: \mathcal{A}\to \mathcal{A}_{n}$ such that $\pi_{n}(A)=\mathcal{A}_{n}\cap T_{n}(A)$, where $T_{n}(A)$ is the element of $\mathcal{T}_{n}$ containing $A$. Let $l$ be the largest integer satisfying $2^{l}\le p$.
	
	We first make the following decomposition:
	\begin{align}\label{Eq_decomposition_1}
		\big\Vert \sup_{A\in \mathcal{A}}\vert \zeta^\top A^\top A\tilde{\zeta}\vert\big\Vert_{L_{p}}\le\Big\Vert \sup_{A\in \mathcal{A}}\big\vert \zeta^\top A^\top A\tilde{\zeta}-\zeta^\top \pi_{l}(A)^\top\pi_{l}(A)\tilde{\zeta}\big\vert\Big\Vert_{L_{p}}+\Big\Vert \sup_{A\in \mathcal{A}}\big\vert \zeta^\top \pi_{l}(A)^\top\pi_{l}(A)\tilde{\zeta}\big\vert\Big\Vert_{L_{p}}.
	\end{align}
	
	We have for any fixed $A\in\mathcal{A}$,
	\begin{align}\label{Eq_decomposition_2}
		\big\vert \zeta^\top A^\top A\tilde{\zeta}-\zeta^\top \pi_{l}(A)^\top\pi_{l}(A)\tilde{\zeta}\big\vert\le \sum_{r\ge l}\big\vert\zeta^\top \Lambda_{r+1}(A)^\top\pi_{r+1}(A)\tilde{\zeta}\big\vert+\sum_{r\ge l}\big\vert\zeta^\top \pi_{r}(A)^\top\Lambda_{r+1}(A)\tilde{\zeta}\big\vert,
	\end{align}
	where $\Lambda_{r+1}A=\pi_{r+1}(A)-\pi_{r}(A)$.
	
	Conditionally on $\tilde{\zeta}$, we have by Lemma \ref{Lem_alpha_0}
	\begin{align}
		\big\Vert \zeta^\top \Lambda_{r+1}(A)^\top\pi_{r+1}(A)\tilde{\zeta}\big\Vert_{L_{p}}\lesssim_{\alpha}&p^{1/2}\big\Vert\Lambda_{r+1}(A)^\top\pi_{r+1}(A)\tilde{\zeta} \big\Vert_{2}+p^{1/\alpha}\big\Vert\Lambda_{r+1}(A)^\top\pi_{r+1}(A)\tilde{\zeta} \big\Vert_{\infty}.\nonumber
	\end{align}
	Then, we have by Lemma \ref{Lem_Moments_2}
	\begin{align}
		\textsf{P}_{\zeta}\Big\{ \vert \zeta^\top S_{r+1}(A, \tilde{\zeta})\vert\ge C(\alpha)\big(\sqrt{t} \Vert S_{r+1}(A, \tilde{\zeta}) \Vert_{2}+t^{1/\alpha} \Vert S_{r+1}(A, \tilde{\zeta}) \Vert_{\infty}\big)\Big\}\le e^{2}e^{-t}.\nonumber
	\end{align}
	Here, $S_{r+1}(A, \tilde{\zeta})=\Lambda_{r+1}(A)^\top\pi_{r+1}(A)\tilde{\zeta}$. By the definition of the operator norms, we have 
	\begin{align}
		\Vert S_{r+1}(A, \tilde{\zeta}) \Vert_{2}\le \Vert\Lambda_{r+1}(A)\Vert_{l_{2}\to l_{2}}\sup_{A\in\mathcal{A}} \Vert A\tilde{\zeta}\Vert_{2}\nonumber
	\end{align}
	and 
	\begin{align}
		\Vert S_{r+1}(A, \tilde{\zeta}) \Vert_{\infty}\le \Vert\Lambda_{r+1}(A)\Vert_{l_{2}\to l_{\infty}}\sup_{A\in\mathcal{A}} \Vert A\tilde{\zeta}\Vert_{2}.\nonumber
	\end{align}
	Note that $\big\vert\{ \pi_{r}(A): A\in\mathcal{A} \}\big\vert\le \vert \mathcal{A}_{r}\vert\le 2^{2^{r}}$. Then, $$\big\vert\{ \Lambda_{r+1}(A)^\top\pi_{r+1}(A): A\in \mathcal{A} \}\big\vert\le 2^{2^{r+3}}.$$
	Denote  $\Omega_{t, p}$ by the event
	\begin{align}
		\bigcap_{r\ge l}\bigcap_{A\in\mathcal{A}}\Big\{ \vert\zeta^\top S_{r+1}(A, \tilde{\zeta}) \vert\le C(\alpha)\sup_{A\in\mathcal{A}} \Vert A\tilde{\zeta}\Vert_{2}\big(\sqrt{t}2^{\frac{r}{2}}\Vert\Lambda_{r+1}(A)\Vert_{l_{2}\to l_{2}}+t^{\frac{1}{\alpha}}2^{\frac{r}{\alpha}}\Vert\Lambda_{r+1}(A)\Vert_{l_{2}\to l_{\infty}}\big)\Big\}. \nonumber
	\end{align}
	For $t>16$, we have by  a union bound
	\begin{align}
		\textsf{P}_{\zeta}\{ \Omega_{t, p}^{c} \}\le \sum_{r\ge l}2^{2^{r+3}}e^{2}e^{-2^{r}t}\le C_{1}(\alpha)\exp\big( -c(\alpha)pt \big).\nonumber
	\end{align}
	Assuming the event $\Omega_{t, p}$ occurs, we have
	\begin{align}
		\sum_{r\ge l}\big\vert\zeta^\top S_{r+1}(A, \tilde{\zeta}) \big\vert
		\le C(\alpha)\sup_{A\in\mathcal{A}} \Vert A\tilde{\zeta}\Vert_{2}\Big(\sum_{r\ge l}\sqrt{t}2^{\frac{r}{2}}\Vert \Lambda_{r+1}A\Vert_{l_{2}\to l_{2}}+\sum_{r\ge l}t^{\frac{1}{\alpha}}2^{\frac{r}{\alpha}}\Vert\Lambda_{r+1}A\Vert_{l_{2}\to l_{\infty}}\Big).\nonumber
	\end{align}
	Note that we have $\pi_{r+1}(A), \pi_{r}(A)\in T_{r}(A)\subset T^{(1)}_{r-1}(A)$ and so
	\begin{align}
		\Vert \Lambda_{r+1}A\Vert_{l_{2}\to l_{2}}\le \Delta_{d_{2}}\big(T^{(1)}_{r-1}(A)\big).
	\end{align}
	Hence, we have by the definition  of $\mathcal{T}^{(1)}$
	\begin{align}
		\sum_{r\ge l}2^{\frac{r}{2}}\Vert \Lambda_{r+1}A\Vert_{l_{2}\to l_{2}}\le \sum_{r\ge l}2^{\frac{r}{2}}\Delta_{d_{2}}\big(T^{(1)}_{r-1}(A)\big)\le 2\sqrt{2}\gamma_{2}^{\prime}(\mathcal{A}, d_{2}).\nonumber
	\end{align}
	Analogously, we have
	\begin{align}
		\sum_{r\ge l}2^{\frac{r}{\alpha}}\Vert\Lambda_{r+1}A\Vert_{l_{2}\to l_{\alpha^{*}}}\le 2\cdot 2^{\frac{1}{\alpha}}\gamma_{\alpha}^{\prime}(\mathcal{A}, d_{\infty}).\nonumber
	\end{align}
	Thus,
	\begin{align}
		\sup_{A\in \mathcal{A}}\sum_{r\ge l}\vert\zeta^\top S_{r+1}(A, \tilde{\zeta}) \vert\le 4C(\alpha)\sup_{A\in\mathcal{A}}\Vert A\tilde{\zeta}\Vert_{2}\big( \sqrt{t}\gamma_{2}^{\prime}(\mathcal{A}, d_{2})+t^{\frac{1}{\alpha}}    \gamma_{\alpha}^{\prime}(\mathcal{A}, d_{\infty})\big).\nonumber
	\end{align}
	As a consequence, we have for $t>16$
	\begin{align}
		\textsf{P}\Big\{ \sup_{A\in \mathcal{A}}\sum_{r\ge l}\vert\zeta^\top S_{r+1}(A, \tilde{\zeta}) \vert> C_{2}(\alpha)\sup_{A\in\mathcal{A}}\Vert A\tilde{\zeta}\Vert_{2}t\big( \gamma_{2}^{\prime}(\mathcal{A}, d_{2})+    \gamma_{\alpha}^{\prime}(\mathcal{A}, d_{\infty})\big)   \Big\}
		\le C_{1}(\alpha)\exp(-c(\alpha)pt).\nonumber
	\end{align}
	A direct integration and taking expectation with respect to variables $\{ \tilde{\eta}_{i}, i\le n\}$ lead to
	\begin{align}
		\Big\Vert\sup_{A\in \mathcal{A}}\sum_{r\ge l}\vert\zeta^\top S_{r+1}(A, \tilde{\zeta}) \vert\Big\Vert_{L_{p}}\lesssim_{\alpha}\Big\Vert\sup_{A\in\mathcal{A}} \Vert A\tilde{\zeta}\Vert_{2}\Big\Vert_{L_{p}}\big( \gamma_{2}(\mathcal{A}, d_{2})+\gamma_{\alpha}(\mathcal{A}, d_{\infty})\big)\nonumber.
	\end{align}
	Following the same line, one can give a similar bound for 
	$\sup_{A\in \mathcal{A}}\sum_{r\ge l}\vert\zeta^\top\pi_{r}(A)^\top \Lambda_{r+1}(A)\tilde{\zeta} \vert.$
	By virtue of (\ref{Eq_decomposition_2}), we have for $p\ge 1$
	\begin{align}
		\Big\Vert \sup_{A\in \mathcal{A}}\vert \zeta^\top A^\top A\tilde{\zeta}-\eta^\top \pi_{l}(A)^\top\pi_{l}(A)\tilde{\zeta}\vert\Big\Vert_{L_{p}}
		\lesssim_{\alpha}\Big\Vert\sup_{A\in\mathcal{A}} \Vert A\tilde{\zeta}\Vert_{2}\Big\Vert_{L_{p}}\big( \gamma_{2}(\mathcal{A}, d_{2})+\gamma_{\alpha}(\mathcal{A}, d_{\infty})\big).\nonumber
	\end{align}
	
	As for the other part of (\ref{Eq_decomposition_1}), we have for $p\ge 1$
	\begin{align}
		\textsf{E}\sup_{A\in \mathcal{A}}\vert \zeta^\top \pi_{l}(A)^\top\pi_{l}(A)\tilde{\eta}\vert^{p}&\le \sum_{B\in T_{l}}	\textsf{E}\vert \zeta^\top B^\top B\tilde{\zeta}\vert^{p}\le e^{p}\sup_{A\in \mathcal{A}}\textsf{E}\vert \zeta^\top A^\top A\tilde{\zeta}\vert^{p}.\nonumber
	\end{align}
	Thus
	\begin{align}
		\Big\Vert\sup_{A\in \mathcal{A}}\vert \zeta^\top \pi_{l}(A)^\top\pi_{l}(A)\tilde{\zeta}\vert\Big\Vert_{L_{p}}\le e\sup_{A\in \mathcal{A}}\Vert \zeta^\top A^\top A\tilde{\zeta}\Vert_{L_{p}}.\nonumber
	\end{align}
	Then, we conclude the proof by virtue of (\ref{Eq_decomposition_1}). 
\end{proof}

\begin{proof}[Proof of Theorem \ref{Theo_main}]
	Let $\zeta=(\zeta_{1}, \cdots, \zeta_{n})$ and $\zeta_{1}, \cdots, \zeta_{n}\stackrel{i.i.d.}{\sim}\mathcal{W}_{s}(\alpha)$, $0< \alpha\le 1$. We have by Remark \ref{Rem_2} and Proposition \ref{Prop_pthmoments} 
	\begin{align}
		\Big\Vert\sup_{A\in\mathcal{A}} \big\vert \Vert A\xi\Vert_{2}^{2}-\textsf{E}\Vert A\xi\Vert_{2}^{2} \big\vert\Big\Vert_{L_{p}}\lesssim_{L, \alpha}\sup_{A\in \mathcal{A}}\Vert \zeta^\top A^\top A\tilde{\zeta}\Vert_{L_{p}}+\Big\Vert\sup_{A\in\mathcal{A}} \Vert A\tilde{\zeta}\Vert_{2}\Big\Vert_{L_{p}}\big(\gamma_{2}(\mathcal{A}, \Vert\cdot\Vert_{l_{2}\to l_{2}})+\gamma_{\alpha}(\mathcal{A}, \Vert\cdot\Vert_{l_{2}\to l_{\infty}}) \big)\nonumber
	\end{align}

	Set $S=\{ A^\top x: x\in B^{n}_{2}, A\in\mathcal{A} \}$.  Lemmas \ref{Lem_alpha_0} and \ref{Lem_comparison} yield that 
	\begin{align}\label{Eq_Lpbound}
		\Big\Vert\sup_{A\in\mathcal{A}} \Vert A\zeta\Vert_{2}\Big\Vert_{L_{p}}&=\Big(\textsf{E}\sup_{A\in\mathcal{A}, x\in B_{2}^{n}}\vert x^\top A\zeta\vert^{p}   \Big)^{1/p}=\Big(\textsf{E}\sup_{u\in S}\vert u^\top \zeta\vert^{p}   \Big)^{1/p}\lesssim_{\alpha} \textsf{E}\sup_{u\in S}\vert u^\top \zeta\vert+\sup_{u\in S}\Vert u^\top \zeta\Vert_{L_{p}}\nonumber\\
		&\lesssim_{\alpha} \textsf{E}\sup_{u\in S}\vert u^\top \zeta\vert+\sqrt{p}\sup_{u\in S}\Vert u\Vert_{2}+p^{1/\alpha}\sup_{u\in S}\Vert u\Vert_{\infty}\nonumber\\
		&=\textsf{E}\sup_{A\in \mathcal{A}}\Vert A\zeta\Vert_{2}+\sqrt{p}M_{l_{2}\to l_{2}}(\mathcal{A})+p^{1/\alpha}M_{l_{2}\to l_{\infty}}(\mathcal{A}).
	\end{align}
	We have by Lemma \ref{Lem_decoupling} and  Proposition \ref{Prop_pthmoments} 
	\begin{align}
		\textsf{E}\sup_{A\in \mathcal{A}}\Vert A\tilde{\zeta}\Vert_{2}^{2}&\le \textsf{E} \sup_{A\in\mathcal{A}}\big\vert \Vert A\tilde{\zeta}\Vert_{2}^{2}-\textsf{E}\Vert A\tilde{\zeta}\Vert_{2}^{2}\big\vert  +\sup_{A\in \mathcal{A}}\textsf{E}\Vert A\tilde{\zeta}\Vert_{2}^{2}	\lesssim_{\alpha}\textsf{E}\sup_{A\in \mathcal{A}}\vert \zeta^\top A^\top A\tilde{\zeta}\vert+M^{2}_{F}(\mathcal{A})\nonumber\\
		&\lesssim_{\alpha}\textsf{E}\sup_{A\in \mathcal{A}}\Vert A\tilde{\zeta}\Vert_{2}(\gamma_{2}(\mathcal{A}, \Vert\cdot\Vert_{l_{2}\to l_{2}})+\gamma_{\alpha}(\mathcal{A}, \Vert\cdot\Vert_{l_{2}\to l_{\infty}}))+M^{2}_{F}(\mathcal{A}).\nonumber
	\end{align}
	Therefore, 
	\begin{align}
		\textsf{E}\sup_{A\in \mathcal{A}}\Vert A\tilde{\zeta}\Vert_{2}\le \Big\Vert  \sup_{A\in \mathcal{A}}\Vert A\tilde{\zeta}\Vert_{2}\Big\Vert_{L_{2}}\lesssim_{\alpha}\gamma_{2}(\mathcal{A}, \Vert\cdot\Vert_{l_{2}\to l_{2}})+\gamma_{\alpha}(\mathcal{A}, \Vert\cdot\Vert_{l_{2}\to l_{\infty}})+M_{F}(\mathcal{A}).\nonumber
	\end{align}
	Then, we conclude the proof by Remark \ref{Rem_explanations} and Lemma \ref{Lem_Moments_2}.
\end{proof}

\section{Applications}
An $m\times n$ matrix $A$ has the restricted isometry property (R.I.P.) with parameters $\delta$ and $s$ if satisfying for all $s$-sparse $x$ (i.e., $\vert \{l: x_{l}\neq 0  \}\vert\le s$)
\begin{align}\label{Eqisometry}
	(1-\delta)\Vert x\Vert_{2}^{2}\le \Vert A x\Vert_{2}^{2}\le (1+\delta)\Vert x\Vert_{2}^{2}.
\end{align}
The restricted isometry constant $\delta_{s}$ is defined as the smallest number satisfying (\ref{Eqisometry}). In this section, we shall prove the R.I.P. of partial random circulant matrices (see the definition below).

Recall the  cyclic subtraction: $j\ominus k=(j-k)\mod n$. Denote  by $z*x=((z*x)_{1}, \cdots, (z*x)_{n})^\top$ the circular convolution of two vectors $x, y\in \mathbb{R}^{n}$, where
\begin{align}
	(z*x)_{j}:=\sum_{k=1}^{n}z_{j\ominus k}x_{k}.\nonumber
\end{align}
The circulant matrix $H=H_{z}$ generated by $z$ is a $n\times n$ matrix with entries $H_{jk}=z_{j\ominus k}$. Equivalently, $Hx=z*x$ for every $x\in \mathbb{R}^{n}$. 

Let $\Omega\subset \{1,\cdots, n\}$ be a fixed set with $\vert \Omega\vert=m$. Denote by $R_{\Omega}: \mathbb{R}^{n}\to \mathbb{R}^{m}$  the operator restricting a vector $x\in \mathbb{R}^{n}$ to its entries in $\Omega$. The partial circulant matrix generated by $z$ is defined as follows
\begin{align}
	\Phi=\frac{1}{\sqrt{m}}R_{\Omega}H_{z}.\nonumber
\end{align}
We say $\Phi$ a partial random circulant matrix when $z$ is a random vector. 

Krahmer et al \cite{Krahmer_CPAM} showed the R.I.P. of partitial random circulant matrices generated by standard subgaussian random vectors (i.e., random vectors with independent standard subgaussian entries). Dai et al \cite{Dai_logconcave_RIP} extended this result to the case where the generating random vectors are standard $\alpha$-subexponential vectors, $1\le \alpha\le 2$. The following result extends this result to the case $0<\alpha\le 1$.
\begin{mytheo}\label{Theo_RIP}
	Let $\Phi$ be an $m\times n$ partial random circulant matrix generated by a standard $\alpha$-subexponential random vector $\eta$, $0< \alpha\le 1$. Set $L=\max_{i\le n}\Vert \eta_{i}\Vert_{\Psi_{\alpha}}$. Then, under the condition $m\ge c_{1}(\alpha, L)\delta^{-2}s^{2/\alpha}\log^{4/\alpha} n,$ the restricted isometry constant $\delta_{s}$ satisfies
	\begin{align}
		\textsf{P}\{ \delta_{s}\le \delta \}\ge 1-\exp(-c_{0}(\alpha, L)s^{2/\alpha}\log^{4/\alpha} n).       \nonumber
	\end{align}
\end{mytheo}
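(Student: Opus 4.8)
The plan is to express the restricted isometry constant as the supremum of a \emph{centered} quadratic chaos and then invoke Theorem \ref{Theo_main}. The crucial first step exploits the commutativity of circular convolution: since $H_{\eta}x=\eta*x=x*\eta=H_{x}\eta$ for every $x$, we have $\Phi x=\frac{1}{\sqrt{m}}R_{\Omega}H_{\eta}x=A_{x}\eta$, where $A_{x}:=\frac{1}{\sqrt{m}}R_{\Omega}H_{x}$ is an $m\times n$ matrix depending \emph{linearly} on $x$. Because the entries of $\eta$ are independent, centered and of unit variance, $\textsf{E}\Vert A_{x}\eta\Vert_{2}^{2}=\Vert A_{x}\Vert_{F}^{2}$, and since every row of the circulant matrix $H_{x}$ is a cyclic rearrangement of $x$ we obtain $\Vert A_{x}\Vert_{F}^{2}=\frac{1}{m}\cdot m\cdot\Vert x\Vert_{2}^{2}=\Vert x\Vert_{2}^{2}$. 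Writing $D_{s}$ for the set of $s$-sparse unit vectors, it follows that
\begin{align}
\delta_{s}=\sup_{x\in D_{s}}\big\vert\Vert \Phi x\Vert_{2}^{2}-\Vert x\Vert_{2}^{2}\big\vert=\sup_{x\in D_{s}}\big\vert\Vert A_{x}\eta\Vert_{2}^{2}-\textsf{E}\Vert A_{x}\eta\Vert_{2}^{2}\big\vert,\nonumber
\end{align}
which is precisely the object controlled by Theorem \ref{Theo_main} applied to the matrix family $\mathcal{A}=\{A_{x}:x\in D_{s}\}$ and the random vector $\xi=\eta$.

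It then remains to estimate, for this concrete family, the parameters entering $U_{1}(\alpha,\infty)$, $U_{2}(\alpha,\infty)$, $U_{3}(\alpha,\infty)$ and $M_{l_{2}\to l_{2}}(\mathcal{A})$. Linearity gives $A_{x}-A_{x'}=A_{x-x'}$, so several of them are immediate: $M_{F}(\mathcal{A})=\sup_{x\in D_{s}}\Vert x\Vert_{2}=1$; the quantity $\Vert A_{x}\Vert_{l_{2}\to l_{\infty}}$ is the largest row-$\ell_{2}$ norm of $\frac{1}{\sqrt{m}}R_{\Omega}H_{x}$, namely $\frac{1}{\sqrt{m}}\Vert x\Vert_{2}$, whence $M_{l_{2}\to l_{\infty}}(\mathcal{A})=\frac{1}{\sqrt{m}}$; and the metric induced by $\Vert\cdot\Vert_{l_{2}\to l_{\infty}}$ on $\mathcal{A}$ equals $\frac{1}{\sqrt{m}}\Vert x-x'\Vert_{2}$, i.e.\ the Euclidean metric on $D_{s}$ rescaled by $\frac{1}{\sqrt{m}}$. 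Consequently $\gamma_{\alpha}(\mathcal{A},\Vert\cdot\Vert_{l_{2}\to l_{\infty}})=\frac{1}{\sqrt{m}}\gamma_{\alpha}(D_{s},\Vert\cdot\Vert_{2})$, and a Dudley-type entropy bound for $s$-sparse vectors yields $\gamma_{\alpha}(D_{s},\Vert\cdot\Vert_{2})\lesssim_{\alpha}s^{1/\alpha}\log^{1/\alpha}(en/s)$. The genuinely delicate quantities are those tied to the \emph{operator}-norm geometry: the spectral-norm complexity $\gamma_{2}(\mathcal{A},\Vert\cdot\Vert_{l_{2}\to l_{2}})$, the radius $M_{l_{2}\to l_{2}}(\mathcal{A})=\sup_{x}\Vert A_{x}\Vert_{l_{2}\to l_{2}}$, and $\sup_{x}\Vert A_{x}^{\top}A_{x}\Vert_{F}$, which depend on the spectral behaviour of partial circulant matrices generated by sparse vectors. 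These I would control by the circulant-specific chaining estimates of Krahmer et al.\ \cite{Krahmer_CPAM}, as adapted in \cite{Dai_logconcave_RIP}, which additionally contribute logarithmic factors in $n$.

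With all parameters in hand, the conclusion is a calibration exercise. Feeding the bounds into $\Gamma(\alpha,\infty,\mathcal{A})=\gamma_{2}(\mathcal{A},\Vert\cdot\Vert_{l_{2}\to l_{2}})+\gamma_{\alpha}(\mathcal{A},\Vert\cdot\Vert_{l_{2}\to l_{\infty}})$, the hypothesis $m\ge c_{1}(\alpha,L)\delta^{-2}s^{2/\alpha}\log^{4/\alpha}n$ is chosen precisely so that $\Gamma\lesssim\delta$ and hence $C(\alpha)L^{2}U_{1}(\alpha,\infty)\le\delta/2$; taking the free parameter $t\asymp\delta$ then forces $C(\alpha)L^{2}(U_{1}(\alpha,\infty)+t)\le\delta$, so that the event $\{\delta_{s}>\delta\}$ is contained in the event estimated by Theorem \ref{Theo_main}. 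One must then verify that, for this choice of $t$, each of the three competing exponents $(t/U_{2}(\alpha,\infty))^{2}$, $(t/U_{3}(\alpha,\infty))^{\alpha}$ and $(t/M_{l_{2}\to l_{2}}^{2}(\mathcal{A}))^{\alpha/2}$ is at least of order $s^{2/\alpha}\log^{4/\alpha}n$, which delivers the failure probability $\exp(-c_{0}(\alpha,L)s^{2/\alpha}\log^{4/\alpha}n)$. The main obstacle I anticipate is not the reduction, which is clean, but the sharp evaluation of the operator-norm complexity $\gamma_{2}(\mathcal{A},\Vert\cdot\Vert_{l_{2}\to l_{2}})$ for the sparse circulant family together with the careful tracking of the powers of $s$ and $\log n$ through $U_{1},U_{2},U_{3}$: one has to balance the Euclidean $\gamma_{\alpha}$-term, which governs the $s^{2/\alpha}$ scaling, against the operator-norm contributions, which supply the extra logarithmic factors, and to confirm that all three tail terms---and not merely the deterministic requirement $\Gamma\lesssim\delta$---reach the target rate simultaneously.
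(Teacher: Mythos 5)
Your plan follows essentially the same route as the paper's proof: the same reduction of $\delta_{s}$ to the centered chaos $\sup_{x\in D_{s,n}}\big\vert \Vert V_{x}\eta\Vert_{2}^{2}-\textsf{E}\Vert V_{x}\eta\Vert_{2}^{2}\big\vert$ via commutativity of circular convolution, the same application of Theorem \ref{Theo_main} to the family $\mathcal{A}=\{V_{x}: x\in D_{s,n}\}$, the same identity $M_{F}(\mathcal{A})=1$, the same use of the Krahmer--Mendelson--Rauhut covering estimates for the spectral-norm complexity, and the same calibration ($\Gamma\lesssim\delta$, then $t\asymp\delta$). Two local comments. First, your treatment of $\gamma_{\alpha}(\mathcal{A},\Vert\cdot\Vert_{l_{2}\to l_{\infty}})$ is cleaner and slightly sharper than the paper's: you identify the induced metric with $\frac{1}{\sqrt{m}}\Vert x-x'\Vert_{2}$ and use sparse-vector entropy, getting $\frac{s^{1/\alpha}}{\sqrt{m}}\log^{1/\alpha}(en/s)$, whereas the paper simply majorizes $\Vert\cdot\Vert_{l_{2}\to l_{\infty}}\le\Vert\cdot\Vert_{l_{2}\to l_{2}}$ and reuses the spectral-norm covering bounds to get $\frac{s^{1/\alpha}}{\sqrt{m}}\log^{2/\alpha}n$; both suffice here. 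Second, most of what you label ``genuinely delicate'' is elementary: $\Vert V_{x}\Vert_{l_{2}\to l_{2}}\le\frac{1}{\sqrt{m}}\Vert x\Vert_{1}\le\sqrt{s/m}$ by Young's convolution inequality, and $\Vert V_{x}^{\top}V_{x}\Vert_{F}\le\Vert V_{x}\Vert_{l_{2}\to l_{2}}\Vert V_{x}\Vert_{F}\le\sqrt{s/m}$; only $\gamma_{2}(\mathcal{A},\Vert\cdot\Vert_{l_{2}\to l_{2}})$ actually requires the circulant-specific covering numbers.

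The one step of your plan that will not check out is the final verification target: it is \emph{not} true that all three exponents reach order $s^{2/\alpha}\log^{4/\alpha}n$. With $t\asymp\delta$ and the bounds above, the binding term is $\big(t/M_{l_{2}\to l_{2}}^{2}(\mathcal{A})\big)^{\alpha/2}\gtrsim(\delta m/s)^{\alpha/2}$, and at the minimal admissible $m\asymp\delta^{-2}s^{2/\alpha}\log^{4/\alpha}n$ this is only of order $\delta^{-\alpha/2}s^{1-\alpha/2}\log^{2}n$, which for $0<\alpha\le 1$ falls far short of $s^{2/\alpha}\log^{4/\alpha}n$ (the $U_{3}$ term behaves similarly, and even the subgaussian term $(t/U_{2})^{2}\gtrsim\delta^{2}m/s\asymp s^{2/\alpha-1}\log^{4/\alpha}n$ misses by a factor of $s$). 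This is not a defect of your route relative to the paper's: the paper's own proof terminates with the bound $C_{1}(\alpha)\exp\big(-c_{2}(\alpha,L)(m/s)^{\alpha/2}\delta^{\alpha}\big)$, which has exactly the same shortfall, so the failure probability this argument actually delivers --- in your version or the paper's --- is of the form $\exp(-c\,s^{1-\alpha/2}\log^{2}n)$ rather than the $\exp(-c_{0}s^{2/\alpha}\log^{4/\alpha}n)$ claimed in the statement. If you carry the plan out, state the exponent you actually obtain (e.g.\ $(m/s)^{\alpha/2}\delta^{\alpha}$) instead of forcing the claimed rate.
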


\begin{proof}[Proof of Theorem \ref{Theo_RIP}]
	
	Let $\Omega$ be a subset of $\{1,\cdots, n\}$ with $\vert \Omega\vert=m$ and $P_{\Omega}=R_{\Omega}^\top R_{\Omega}$, where $R_{\Omega}: \mathbb{R}^{n}\to \mathbb{R}^{m}$ is the operator restricting a vector $x\in \mathbb{R}^{n}$ to its entries in $\Omega$.  In particular, $P_{\Omega}: \mathbb{R}^{n}\to \mathbb{R}^{n}$, is a projection operator such that
	\begin{align}
		(P_{\Omega}x)_{l}=x_{l},\,\, l\in \Omega;\quad (P_{\Omega}x)_{l}=0,\,\, l\notin \Omega.\nonumber
	\end{align}
	Let $D_{s, n}=\{ x\in \mathbb{R}^{n}: \Vert x\Vert_{2}\le 1, \Vert x\Vert_{0}\le s \}$. We have by (\ref{Eqisometry}), 
	\begin{align}\label{Eq_RIP_chaos}
		\delta_{s}&=\sup_{x\in D_{s, n}}\Big\vert \frac{1}{m}\Vert \Phi x\Vert_{2}^{2}-\Vert x\Vert_{2}^{2}\Big\vert=\sup_{x\in D_{s, n}}\Big\vert \frac{1}{m}\Vert P_{\Omega}(\eta *x)\Vert_{2}^{2}-\Vert x\Vert_{2}^{2}\Big\vert\nonumber\\
		&=\sup_{x\in D_{s, n}}\Big\vert \frac{1}{m}\Vert P_{\Omega}(x*\eta)\Vert_{2}^{2}-\Vert x\Vert_{2}^{2}\Big\vert=\sup_{x\in D_{s, n}}\Big\vert\Vert V_{x}\eta\Vert_{2}^{2}-\Vert x\Vert_{2}^{2}\Big\vert	\nonumber\\
		&=\sup_{x\in D_{s, n}}\Big\vert\Vert V_{x}\eta\Vert_{2}^{2}-\textsf{E}\Vert V_{x}\eta\Vert_{2}^{2}\Big\vert,
	\end{align}
	where the last equality is due to $\textsf{E}\Vert V_{x}\eta\Vert_{2}^{2}=\Vert x\Vert_{2}^{2}$.

	Let $\mathcal{A}=\{ V_{x}: x\in D_{s, n}\}$. To bound $\delta_{s}$, we  need to control the quantities $M_{F}(\mathcal{A}), M_{l_{2}\to l_{\beta}}(\mathcal{A})$ and $ \gamma_{\alpha}(\mathcal{A}, \Vert\cdot\Vert_{l_{2}\to l_{\beta}})$ ($\beta=2, \infty$) by virtue of Theorem \ref{Theo_main}. 
	
	Observe that the matrices $V_{x}$ have shifted copies of $x$ in all their $m$ nonzero rows. Thus, the $l_{2}$-norm of each nonzero row is $m^{-1/2}\Vert x\Vert_{2}$. Then, for $x\in D_{s, n}$, we have $\Vert V_{x}\Vert_{F}=\Vert x\Vert_{2}\le 1$, and thus $M_{F}(\mathcal{A})=1$.
	
	Note that
	\begin{align}
		\Vert V_{x}\Vert_{l_{2}\to l_{\beta}}\le \Vert V_{x}\Vert_{l_{2}\to l_{2}}\le \sup_{\Vert y\Vert_{2},\Vert z\Vert_{2}\le 1}\frac{1}{\sqrt{m}}\vert y^\top V_{x}z\vert\le \frac{1}{\sqrt{m}}\Vert x\Vert_{1}\le \sqrt{\frac{s}{m}}\Vert x\Vert_{2}\le \sqrt{\frac{s}{m}},\nonumber
	\end{align}
	It follows immediately  from that $M_{l_{2}\to l_{\beta}}(\mathcal{A})\le \sqrt{s/m}$ ($\beta=2, \infty$).
	
	Lastly, we use the covering number to bound $ \gamma_{\alpha}(\mathcal{A}, \Vert\cdot\Vert_{l_{2}\to l_{\beta}})$. In particular, we have
	\begin{align}
		\gamma_{\alpha}(\mathcal{A}, \Vert\cdot\Vert_{l_{2}\to l_{\beta}})\lesssim_{\alpha} \int_{0}^{\infty}\big( \log N(T, \Vert \cdot\Vert_{l_{2}\to l_{\beta}}), u  \big)^{1/\alpha} du,\nonumber
	\end{align} 
	where $N(T, d, u)$ is the covering number of $T$ with the distance $d$ and the radius $u$.
	Note that, Krahmer et al \cite{Krahmer_CPAM} showed the following bounds for $u\ge 1/\sqrt{m}$
	\begin{align}
		\log N(\mathcal{A}, \Vert\cdot\Vert_{l_{2}\to l_{2}}, u)\lesssim\frac{s}{m}(\frac{\log n}{u})^{2}\nonumber
	\end{align}
	and for $u\le 1/\sqrt{m}$
	\begin{align}
		\log N(\mathcal{A}, \Vert\cdot\Vert_{l_{2}\to l_{2}}, u)\lesssim s\log(\frac{en}{su}).\nonumber
	\end{align}
	A direct integration (Section 5.2 in \cite{Dai_logconcave_RIP} contains a detailed calculation) yields that for $0<\alpha\le 1$
	\begin{align}
		\gamma_{\alpha}(\mathcal{A}, \Vert\cdot\Vert_{l_{2}\to l_{\infty}})\le\gamma_{\alpha}(\mathcal{A}, \Vert\cdot\Vert_{l_{2}\to l_{2}})\lesssim_{\alpha}\frac{s^{1/\alpha}}{\sqrt{m}}\log^{2/\alpha} n\nonumber
	\end{align}
	and for $\alpha=2$
	\begin{align}
		\gamma_{2}(\mathcal{A}, \Vert\cdot\Vert_{l_{2}\to l_{2}})\lesssim\sqrt{\frac{s}{m}}\log s\log n.\nonumber 
	\end{align}

	Assume for any $0<\delta, \alpha\le 1$ 
	\begin{align}
		m\ge c_{1}(\alpha, L)\delta^{-2}\max\big\{(s^{2/\alpha}\log^{4/\alpha} n),     (s\log^{2}s\log^{2}n)\big\}=c_{1}(\alpha, L)\delta^{-2}s^{2/\alpha}\log^{4/\alpha} n.\nonumber
	\end{align}
	Then, we have
	\begin{align}
		\gamma_{2}(\mathcal{A}, \Vert\cdot\Vert_{l_{2}\to l_{2}}), \gamma_{\alpha}(\mathcal{A}, \Vert\cdot\Vert_{l_{2}\to l_{\infty}})\lesssim_{\alpha, L}\delta.\nonumber
	\end{align}
	Let $c_{1}(\alpha, L)$ be an appropriately constant such that  $C(\alpha)L^{2}U_{1}(\alpha, \infty)\le\delta/2$, where $C(\alpha)$ is the constant in Theorem \ref{Theo_main}. Then, we have
	\begin{align}
		\textsf{P}\{ \delta_{s}\ge \delta \}&\le \textsf{P}\{ \delta_{s}\ge C(\alpha)L^{2}U_{1}(\alpha, \infty)+\delta/2 \}\le C_{1}(\alpha)\exp(-c_{2}(\alpha, L)(m/s)^{\alpha/2}\delta^{\alpha}),\nonumber
	\end{align}
	which concludes the proof.
\end{proof}

\textbf{Acknowledgment} The work was partly supported by the National Natural Science Foundation of China (12271475, U23A2064).

\end{document}